\let\rel\mathbf
\let\meet\wedge
\let\union\cup
\let\bd\partial
\let\impl\Rightarrow
\DeclareMathOperator{\ar}{ar}
\newcommand{\true}{\textsf{\upshape true}}
\newcommand{\false}{\textsf{\upshape false}}
\newcommand{\vertex}{{\operatorname{\sf vertex}}}
\newcommand{\edge}{\operatorname{\sf edge}}
\newcommand{\pr}{\operatorname{\sf pr}}
\newcommand{\e}{\bigtimes\nolimits}
\newcommand{\w}{\operatorname{\text{\normalfont\sf\&}}}
\newcommand{\relT}[1]{{\rel T(#1)}}
\newcommand{\relcheck}[1]{{\rel{\check #1}}}
\newcommand{\trees}{\mathscr T}
\newcommand{\subsets}[1]{{\mathscr P}(#1)}
\theoremstyle{remark}
\begin{document}

\title[Functors which admit both left and right adjoints]{Functors on relational structures which admit both left and right adjoints}

\author{Víctor Dalmau}
\address{Universitat Pompeu Fabra, Barcelona, Spain}
\email{victor.dalmau@upf.edu}

\author{Andrei Krokhin}
\address{Durham University, UK}
\email{andrei.krokhin@durham.ac.uk}

\author{Jakub Opršal}
\address{University of Birmingham, UK}
\email{j.oprsal@bham.ac.uk}

\thanks{Victor Dalmau was supported by the MICIN under grants PID2019-109137GB-C22 and PID2022-138506NB-C22, and the Maria de Maeztu program (CEX2021-001195-M).
Andrei Krokhin and Jakub Opršal were supported by the UK EPSRC grant EP/R034516/1.
Andrei Krokhin was also supported by the UK EPSRC grant EP/X033201/1.
This project has received funding from the European Union’s Horizon 2020 research and innovation programme under the Marie Skłodowska-Curie Grant Agreement No 101034413.\\\medskip
\copyright V.~Dalmau, A.~Krokhin, J.~Opršal; licensed under Creative Commons License CC-BY 4.0.}

\begin{abstract}
  This paper describes several cases of adjunction in the homomorphism preorder of relational structures. We say that two functors $\Lambda$ and $\Gamma$ between thin categories of relational structures are \emph{adjoint} if for all structures $\mathbf A$ and $\mathbf B$, we have that $\Lambda(\mathbf A)$ maps homomorphically to $\mathbf B$ if and only if $\mathbf A$ maps homomorphically to $\Gamma(\mathbf B)$. If this is the case, $\Lambda$ is called the left adjoint to $\Gamma$ and $\Gamma$ the right adjoint to $\Lambda$.
  In 2015, Foniok and Tardif described some functors on the category of digraphs that allow both left and right adjoints. The main contribution of Foniok and Tardif is a construction of right adjoints to some of the functors identified as right adjoints by Pultr in 1970. We generalise results of Foniok and Tardif to arbitrary relational structures, and coincidently, we also provide more right adjoints on digraphs, and since these constructions are connected to finite duality, we also provide a new construction of duals to trees.
  Our results are inspired by an application in promise constraint satisfaction --- it has been shown that such functors can be used as efficient reductions between these problems.
\end{abstract}

\keywords{relational structure, digraph, homomorphism, homomorphism duality, constraint satisfaction problem}

\maketitle

\section{Introduction}

The study of homomorphisms between (di)graphs and general relational structures plays an important role in combinatorics and theoretical computer science \cite{HN04,FV98,KZ17book}. The importance in theoretical computer science stems, in particular, from the fact that the Constraint Satisfaction Problem (CSP) and its relatives can be cast as the problem of existence of a homomorphism from one relational structure to another. The class of all relational structures of a given signature (e.g., all digraphs) admits the homomorphism preorder, where $\rel A \le \rel B$ for two structure $\rel A$ and $\rel B$ if and only if there exists a homomorphism from $\rel A$ to $\rel B$. This preorder is interesting in its own right \cite{HN04}, and, moreover, important computational problems such as non-uniform CSPs and promise CSPs can be stated in terms of a relative position of an input structure with respect to a fixed structure or a pair of fixed structures. Specifically, the CSP with a template $\rel A$ (which is a fixed structure) asks whether a given structure $\rel I$ satisfies $\rel I\le \rel A$ \cite{FV98,KZ17book}, and the \emph{promise CSP} with a template $\rel A, \rel B$ (which is a pair of structures such that $\rel A \le \rel B$) asks to distinguish between the cases $\rel I \le \rel A$ and $\rel I \not\le \rel B$ \cite{AGH17,KO22}; the promise being that input falls into one of the two (disjoint) cases.

A \emph{(thin) functor} from the class of structures of some signature to the class of structures of a possibly different signature is a mapping that is monotone with respect to the homomorphism preorder. Many well-known examples of constructions in graph theory, such as the arc graph construction, are functors \cite{FT13,FT15}.
In this paper we investigate a kind of homomorphism duality of such functors called \emph{adjunction} --- two functors $\Lambda$ and $\Gamma$ form an adjoint pair if $\Lambda(\rel H)$ maps homomorphically to $\rel G$ if and only if $\rel H$ maps homomorphically to $\Gamma(\rel G)$ for all relational structures $\rel G, \rel H$ of the corresponding signatures.
In this case, $\Lambda$ is called a left adjoint of $\Gamma$ and $\Gamma$ is a right adjoint of $\Lambda$. If, for some $\Gamma$, there exists such $\Lambda$ we also say that $\Gamma$ has (or admits) a right adjoint.

Adjunction is a core concept of category theory, although we should note that here we work with posetal categories of relational structures, i.e., only existence of a homomorphism matters, rather than with the category of relational structures together with homomorphisms. Adjoints between categories of relational structures in the stricter categorical sense have been completely described by Pultr \cite{Pul70}.
Here, we call these functors \emph{left and central Pultr functors} (following the nomenclature of Foniok and Tardif \cite{FT15}) --- a left Pultr functor is a left adjoint to a central Pultr functor. It is clear, that each pair of left and central Pultr functors is adjoint in our sense as well. Nevertheless, there are more functors that admit a right adjoint than left Pultr functors, e.g., the afore-mentioned arc graph construction.
In this paper, we ask the question which central Pultr functors (i.e., the functors which admit a left adjoint by Pultr's characterisation) admit a right adjoint?

A necessary condition for a central Pultr functor to have a right adjoint was given by Foniok and Tardif \cite{FT15}. They also provided explicit constructions of the right adjoints for some of such functors mapping digraphs to digraphs. Loosely speaking, they gave explicit constructions of right adjoints to functors that either do not change the domain (i.e., the set of vertices) of the digraph, or such that the new domain is the set of edges (arcs) of the input digraph.
In the present paper, we extend their results by proving an explicit construction that works with general relational structures. In particular, we provide adjoints to some of the functors between classes of relational structures satisfying the above-mentioned necessary condition, and that either do not change the domain of the structure, or the new domain is one of the relations of the input structure.
Hence we prove a direct generalisation of \cite{FT15} to arbitrary relational signatures (see Sections~\ref{sec:adjoint-1} and \ref{sec:adjoint-edge}).
Furthermore, by composing right adjoints constructed in this way, we obtain more adjunctions than \cite{FT15} even for the digraph case (see Section~\ref{sec:composition}).
Finally, we believe that our constructions are more intuitive than those provided by \cite{FT15} --- in particular, the above mentioned necessary condition relates adjunction with \emph{finite duality}, and therefore a construction of a right adjoint is related to a construction of a \emph{dual of a tree}. Moreover, every finite tree can be built via natural inductive process. Our constructions, and the proofs that they work, reflect this inductive process for certain trees used in the definition of the functor.
 
\section{Preliminaries}

We recall some basic definitions and notation. We use the symbol $\subsets X$ to denote the power set of a set $X$.

\subsection{Structures and homomorphisms}

A directed graph can be defined as a pair $\rel G = (G, E^{\rel G})$ where $G$ is the set of vertices of $\rel G$ and $E^{\rel G}\subseteq G\times G$ is the set of edges of $\rel G$. This is a special case of a relational structure with a single relation of arity $2$ as defined below.

\begin{definition}
  A \emph{relational signature} $\tau$ is a tuple of relational symbols $R, S, \dots$ where each symbol is assigned a positive integer, called an arity and denoted by $\ar R, \ar S, \dots$.

  A \emph{relational $\tau$-structure} is a tuple $\rel A = (A; R^{\rel A}, S^{\rel A}, \dots)$, where $A$ is a set called the \emph{domain (or universe) of $\rel A$}, and $R^{\rel A} \subseteq A^{\ar R}$, $S^{\rel A} \subseteq A^{\ar S}$, \dots\ are relations on this domain of the corresponding arity.
\end{definition}

The relational symbols $R, S, \dots$ in $\tau$ are also referred to as \emph{$\tau$-symbols}.
When no confusion can arise, we say a $\tau$-structure (dropping ``relational''), or even simply a structure, when $\tau$ is clear from the context. Two structures of the same signature are said to be \emph{similar}.
We will call elements from the domain of some structure \emph{vertices}, and tuples in a relation $R$ of some structure \emph{$R$-edges}, or simply \emph{edges} if the symbol $R$ is either irrelevant or clear from the context.

In the rest of the paper, we will not use the symbol $V$ as a relational symbol --- we restrict its use to refer to the domain of the structure (e.g., in Definition~\ref{def:small-structures} and Section~\ref{sec:terms} below).

Loosely speaking, a homomorphism between two similar structures is a map that preserves relations, e.g., a graph homomorphism would be a map between the two graphs that preserves edges. Formally, a homomorphism is defined as follows.

\begin{definition}
  Let $\rel A$ and $\rel B$ be two structures of the same signature $\tau$. A \emph{homomorphism} $f\colon \rel A \to \rel B$ is defined to be a mapping
  $f\colon A \to B$ such that, for each relational symbol $R$ in $\tau$ and each $(a_1,\dots, a_{\ar R}) \in R^{\rel A}$, we have
  \[
    (f(a_1), \dots, f(a_{\ar R})) \in R^{\rel B}.
  \]
  We will write $\rel A \to \rel B$ if there exists a homomorphism from $\rel A$ to $\rel B$. The set of all homomorphisms from $\rel A$ to $\rel B$ is denoted by $\hom(\rel A, \rel B)$.
\end{definition}

The above definition can be rephrased by saying that the function $f \colon A \to B$ has a coordinate-wise action on each of the relations $R$, i.e., for each relational symbol $R$ in the signature, the expression
\[
    f^R((a_1,\dots, a_{\ar R})) = (f(a_1), \dots, f(a_{\ar R}))
\]
defines a function $f^R\colon R^{\rel A} \to R^{\rel B}$. We use the symbol $f^R$ throughout this paper.

Finally, since we will be extensively working with the homomorphism preorder, this in particular means that we will often work with structures up to homomorphic equivalence --- we say that two structures $\rel A$ and $\rel B$ are \emph{homomorphically equivalent} if we have $\rel A \to \rel B$ and $\rel B \to \rel A$.  Such structures would be identified if we followed the standard procedure to turn the homomorphism preorder into a proper partial order.
Note that two structures $\rel A$ and $\rel B$ are homomorphically equivalent if and only if for every structure $\rel C$, we have $\rel C \to \rel A$ if and only if $\rel C \to \rel B$, i.e., they allow homomorphisms from the same structures. The same is also true for allowing homomorphisms to the same structures, i.e., $\rel A$ and $\rel B$ are homomorphically equivalent if and only if, for all $\rel C$, we have $\rel A \to \rel C$ if and only if $\rel B \to \rel C$. A structure is called a \emph{core}, if it is not homomorphically equivalent to any of its proper substructures.

Certain structures called \emph{trees} play a special role in this paper. We use a definition of a tree equivalent to the one given in \cite[Section 3]{NT00}.
Loosely speaking, a relational structure is a \emph{tree} if it is connected, contains no cycles, and none of its relations has tuples with repeated entries. This is more precisely defined by using the incidence graph of a structure.

\begin{definition}
  The \emph{incidence graph} of a structure $\rel A = (A; R^{\rel A}, \dots)$ is a bipartite multigraph whose vertex set is the disjoint union of $A$ and $R^{\rel A}$ for each relational symbol $R$. There is an edge connecting every tuple $(a_1, \dots, a_k)\in R^{\rel A}$ with every one of its coordinates $a_i \in A$.
  In particular, if some element appears multiple times in this tuple, then the edge connecting it to the tuple appears with the same multiplicity. In this case, the incidence graph contains a cycle of length 2 (i.e., two parallel edges).

  A $\tau$-structure is a ($\tau$-)\emph{tree} if its incidence graph is a tree, i.e., an acyclic connected digraph.
\end{definition}

\begin{remark}
  An undirected graph can be viewed as a relational structure with a single relation $E$ whose universe is the set of vertices $V$. The relation $E \subseteq V \times V$ contains for every edge two tuples $(u,v)$ and $(v,u)$. This means that no undirected graph with at least one edge is a tree according to the above definition since if $(u,v)$ is an edge, then $u$, $(u,v)$, $v$, $(v,u)$ is a 4-cycle of the incidence graph.  Intuitively, relational structures with binary relations are directed graphs; an undirected graph is encoded as directed by including both orientations of each edge which results in a directed cycle of length 2.
  Under the above definition, a directed graph is a tree if it is an oriented tree.
\end{remark}

We now fix notation for some small structures that will be used later.

\begin{definition} \label{def:small-structures}
Fix a relational signature $\sigma$. We define the following structures:
    \begin{itemize}
        \item $\rel V_1$ is the structure with a single vertex, i.e., $V_1 = \{ 1 \}$, and empty relations, i.e., $R^{\rel V_1} = \emptyset$ for all $\sigma$-symbols $R$.
        \item Let $S$ be a relational symbol, $\rel S_1$ is a structure with $\ar S$ vertices related by $S$ and all other relations empty. More precisely, $S_1 = \{ 1, \dots, k \}$ where $k = \ar_S$, $S^{\rel S_1} = \{ (1, \dots, k) \}$, and $R^{\rel S_1} = \emptyset$ for all $\sigma$-symbols $R$ except $S$.
    \end{itemize}
\end{definition}

Since adjunction is a form of duality, we will often mention homomomorphism duality of structures.

\begin{definition}
  A pair $(\rel T, \rel D)$ of similar structures is called a \emph{duality pair} if, for all structures $\rel A$ similar to $\rel T$, either $\rel T \to \rel A$ or $\rel A \to \rel D$. In this case, $\rel D$ is called a dual to $\rel T$.
\end{definition}

It was shown in \cite{NT00} that a structure has a dual if and only if this structure is homomorphically equivalent to a tree. We will also give an alternative proof of one of the implications in Section~\ref{sec:duals}.

\subsection{Pultr functors: gadget replacement and pp-constructions}

Traditionally, in the CSP literature, pp-constructions would be described in the language of logic using so called \emph{primitive positive formulae} (logical formulae that use only $\exists$, $\wedge$, and $=$). We refer to \cite[Definition 19]{BKW17} for details.  In this paper, we define ``pp-constructions'' using a language similar to \cite[Definitions 2.1--2.3]{FT15}.

\begin{definition} \label{def:pultr-template}
  Let $\sigma$ and $\tau$ be two relational signatures. A $(\sigma, \tau)$-\emph{Pultr template} is a tuple of $\sigma$-structures consisting of $\rel P$, and $\rel Q_R$, one for each $\tau$-symbol $R$, together with homomorphisms $\epsilon_{i, R} \colon \rel P \to \rel Q_R$ for each $\tau$-symbol $R$ and all $i \in \{ 1, \dots, \ar R \}$.
\end{definition}

\begin{definition}
  Given a $(\sigma, \tau)$-Pultr template as above, we define two functors $\Lambda$ and $\Gamma$, called \emph{(left and central) Pultr functors}.
  \begin{itemize}
    \item Given a $\tau$-structure $\rel A$, we define a $\sigma$-structure $\Lambda(\rel A)$ in the following way: For each $a\in A$, introduce to $\Lambda(\rel A)$ a copy of $\rel P$ denoted by $\rel P_a$, and for each $\tau$-symbol $R$ and each $(a_1,\dots, a_k)\in R^{\rel A}$, introduce to $\Lambda(\rel A)$ a copy of $\rel Q_R$ with the image of $\rel P$ under $\epsilon_{i,R}$ identified with $\rel P_{a_i}$ for all $i \in \{ 1, \dots, k \}$.
    \item Given a $\sigma$-structure $\rel B$, we define a $\tau$-structure $\Gamma(\rel B)$ whose universe consists of all homomorphisms $h\colon \rel P \to \rel B$. The relation $R^{\Gamma(\rel B)}$ where $R$ is a~$\tau$-symbol is then defined to contain all tuples $(h_1, \dots, h_k)$ of such homomorphisms for which there is a homomorphism $g\colon \rel Q_R \to \rel B$ such that $h_i = g\circ \epsilon_{i,R}$ for all $i \in \{1, \dots, k\}$.
  \end{itemize}
\end{definition}

Once the definitions are settled, it is not too hard to show that, for any Pultr template, the corresponding Pultr functors $\Lambda$ and $\Gamma$ are left and right adjoints. This statement is attributed to \cite{Pul70}, although it was rediscovered on numerous occasions, and it is considered folklore in category theory.

The fact that both $\Lambda$ and $\Gamma$ are (thin) functors follows from the adjunction. It can be also easily proved directly, for example, a homomorphism $f^\Gamma\colon \Gamma(\rel A) \to \Gamma(\rel B)$ can be obtained from a homomorphism $f\colon \rel A \to \rel B$ by setting $f^\Gamma(h) = f\circ h$ for each $h\colon \rel P \to \rel A$.

\begin{example} \label{ex:arc-graph}
  Consider the Pultr template consisting of structures $\rel P = (\{0,1\}; \{(0,1)\})$ and $\rel Q_E = (\{0,1,2\}; \{(0,1), (1,2)\})$ with $\epsilon_1(0) = 0$ and $\epsilon_1(1) = 1$, and $\epsilon_2(0) = 1$ and $\epsilon_2(1) = 2$.

  The corresponding central Pultr functor $\Gamma$ is the arc-graph construction that is usually denoted by $\delta$; given a (directed) graph $\rel G = (G, E^{\rel G})$, the digraph $\delta(\rel G)$ is defined as the graph with the vertex set $E^{\rel G}$ and edges $((u,v), (v,w)) \in E^{\rel G} \times E^{\rel G}$.

  The left Pultr functor $\Lambda$ corresponding to this template provides a left adjoint to the arc-graph construction, and can be explicitly described as follows: Given a digraph $\rel G$, the digraph $\Lambda(\rel G)$ is constructed by replacing each vertex $v \in G$ with a pair of vertices $v_0, v_1$ connected by an edge, i.e., with $(v_0, v_1) \in E^{\Lambda(\rel G)}$. Furthermore, for each edge $(u, v) \in E^{\rel G}$, we identify the vertices $u_1$ and $v_0$.
  For example, the image of a path of length $n$, i.e., the digraph $\rel P_n = (\{0, \dots, n\}; \{(i, i+1) \mid i \in \{0, n-1\})$ is the path of length $n + 1$ consisting of vertices $0_0, 0_1 = 1_0, 1_1 = 2_0, \dots, n_1$.

  Observe that, in this case, we have that $\Lambda(\Gamma(\rel P_n))$ is isomorphic to $\rel P_n$. This does not need to happen for any digraph in place of $\rel P_n$, nevertheless, the existence of a homomorphism $\Lambda(\Gamma(\rel G)) \to \rel G$ for all digraphs $\rel G$ follows from the adjunction in a straightforward way.
\end{example}

We remark that the connection between the adjunction of left and central Pultr functors and the algebraic reductions between CSPs has been described in \cite[Section 4.1]{KOWZ20}. Let us briefly mention that the central Pultr functors are \emph{pp-constructions} (more precisely, a structure $\rel A$ is pp-constructible from $\rel B$ if it is homomorphically equivalent to the structure $\Gamma(\rel B)$ for some Pultr functor $\Gamma$), and the left Pultr functors are called \emph{gadget replacements} in this context.

The main contribution of the present paper is an investigation of the cases of Pultr templates, for which $\Gamma$ is also a left adjoint, i.e., it admits a right adjoint $\Omega$.
The following necessary condition for this was proved in \cite{FT15} for the case of digraphs, but the proof goes through verbatim for general structures; we include it for completeness.

\begin{theorem}[{\cite[Theorem 2.5]{FT15}}] \label{thm:adjoints-and-duals}
  Let $\Lambda$ and $\Gamma$ be a pair of left and central Pultr functors defined by a $(\sigma, \tau)$-Pultr template. If $\Gamma$ has a right adjoint, then, for each $\tau$-tree $\rel T$, $\Lambda(\rel T)$ is homomorphically equivalent to a $\sigma$-tree.
\end{theorem}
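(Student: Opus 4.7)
The plan is to combine two adjunctions with the Nešetřil--Tardif theorem that trees are exactly (up to homomorphic equivalence) the structures admitting duals, cited just before the statement.

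First, since $\rel T$ is a $\tau$-tree, by \cite{NT00} there exists a $\tau$-structure $\rel D$ such that $(\rel T, \rel D)$ is a duality pair, i.e., for every $\tau$-structure $\rel A$ we have $\rel T \to \rel A$ or $\rel A \to \rel D$. The candidate dual for $\Lambda(\rel T)$ will be $\Omega(\rel D)$, where $\Omega$ is the assumed right adjoint of $\Gamma$.

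Next, I would verify the duality pair property for $(\Lambda(\rel T), \Omega(\rel D))$. Take an arbitrary $\sigma$-structure $\rel B$. Instantiate the duality for $\rel T$ at the $\tau$-structure $\Gamma(\rel B)$: either $\rel T \to \Gamma(\rel B)$, or $\Gamma(\rel B) \to \rel D$. Using the adjunction $\Lambda \dashv \Gamma$, the first disjunct is equivalent to $\Lambda(\rel T) \to \rel B$. Using the adjunction $\Gamma \dashv \Omega$, the second disjunct is equivalent to $\rel B \to \Omega(\rel D)$. Therefore, for every $\rel B$, either $\Lambda(\rel T) \to \rel B$ or $\rel B \to \Omega(\rel D)$, which is precisely the statement that $(\Lambda(\rel T), \Omega(\rel D))$ is a duality pair.

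Finally, applying the Nešetřil--Tardif characterisation in the other direction, any structure admitting a dual is homomorphically equivalent to a tree; hence $\Lambda(\rel T)$ is homomorphically equivalent to a $\sigma$-tree, which is what was to be shown.

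There is no real obstacle here: the proof is essentially a two-line chase of adjunctions, with the only subtlety being to apply the duality for $\rel T$ to the specific test structure $\Gamma(\rel B)$ so that both adjunctions can be invoked on the two resulting disjuncts. The heavy lifting is entirely contained in the cited result of \cite{NT00}, which supplies both the existence of $\rel D$ and the final conversion of the duality pair back into homomorphic equivalence with a tree.
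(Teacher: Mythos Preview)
Your proposal is correct and essentially identical to the paper's proof: both show that $(\Lambda(\rel T),\Omega(\rel D))$ is a duality pair by combining the two adjunctions with the duality pair $(\rel T,\rel D)$, then invoke \cite{NT00} in both directions. The only cosmetic difference is that the paper phrases the verification as a chain of equivalences starting from $\Lambda(\rel T)\not\to\rel A$, whereas you instantiate the duality at $\Gamma(\rel B)$ and convert each disjunct; the content is the same.
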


\begin{proof}
  A structure has a dual if and only if it is homomorphically equivalent to a tree \cite{NT00}.
  Thus, it suffices to prove that if $\Gamma$ has both a left adjoint $\Lambda$ and a right adjoint $\Omega$, and $(\rel T, \rel D)$ is a duality pair, then $(\Lambda(\rel T), \Omega(\rel D))$ is a duality pair as well. To show that, observe that, for any $\rel A$,  $\Lambda(\rel T) \not\to \rel A$ is equivalent to $\rel T \not\to \Gamma(\rel A)$ since $\Lambda$ is a left adjoint to $\Gamma$. The latter condition is equivalent to $\Gamma(\rel A) \to \rel D$, since $\rel D$ is a dual of $\rel T$, which in turn is equivalent to $\rel A \to \Omega(\rel D)$, since $\Omega$ is a right adjoint to $\Gamma$.
\end{proof}

\begin{corollary}
  If a central Pultr functor $\Gamma$ has a right adjoint, then all the structures in its Pultr template are homomorphically equivalent to trees.
\end{corollary}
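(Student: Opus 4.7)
The plan is to apply Theorem~\ref{thm:adjoints-and-duals} to the small $\tau$-trees $\rel V_1$ and $\rel S_1$ from Definition~\ref{def:small-structures}. First, I would verify that these really are $\tau$-trees: the incidence graph of $\rel V_1$ consists of a single isolated vertex, and the incidence graph of $\rel S_1$ is a star with $\ar S$ leaves (one edge per coordinate of the unique tuple, each appearing exactly once since the coordinates are distinct); both of these are trees.

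Next, I would unwind the definition of $\Lambda$ on each of these structures. Applied to $\rel V_1$, the construction introduces one copy of $\rel P$ and no copies of any $\rel Q_R$, so $\Lambda(\rel V_1) = \rel P$. Applied to $\rel S_1$ for a $\tau$-symbol $S$ of arity $k$, the construction introduces copies $\rel P_1, \dots, \rel P_k$ of $\rel P$ together with a single copy of $\rel Q_S$ in which the image $\epsilon_{i,S}(\rel P)$ is identified with $\rel P_i$. Since each $\epsilon_{i,S}$ is a homomorphism, the tuples contributed by the copies $\rel P_i$ already sit inside the copy of $\rel Q_S$, and no vertices outside that copy survive the identifications; hence $\Lambda(\rel S_1)$ is isomorphic to $\rel Q_S$.

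Finally, Theorem~\ref{thm:adjoints-and-duals} applied to $\rel V_1$ gives that $\rel P = \Lambda(\rel V_1)$ is homomorphically equivalent to a $\sigma$-tree, and applied to $\rel S_1$ (for each $\tau$-symbol $S$) gives that $\rel Q_S = \Lambda(\rel S_1)$ is homomorphically equivalent to a $\sigma$-tree. This exhausts every structure in the Pultr template, which is the desired conclusion. The only slightly subtle point is the identification $\Lambda(\rel S_1) = \rel Q_S$ when some $\epsilon_{i,S}$ fails to be injective; but in that case $\rel P_i$ merely collapses into a substructure of $\rel Q_S$ that is already present, so the argument is unaffected.
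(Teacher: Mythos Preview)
Your proposal is correct and follows essentially the same approach as the paper: both apply Theorem~\ref{thm:adjoints-and-duals} to the $\tau$-trees $\rel V_1$ and $\rel S_1$ after observing that $\Lambda(\rel V_1)\cong\rel P$ and $\Lambda(\rel S_1)\cong\rel Q_S$. You simply supply more detail (verifying the incidence graphs are trees, unwinding the definition of $\Lambda$, and commenting on the non-injective case) where the paper cites \cite{FT15} and says the identifications are ``not hard to see''.
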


\begin{proof}
  As shown in \cite{FT15} (and not hard to see), if the template consists of structures $\rel P$ and $\rel Q_R$ for $\tau$-symbols $R$, then $\rel P$ is isomorphic to $\Lambda(\rel V_1)$ and $\rel Q_R$ is isomorphic to $\Lambda(\rel R_1)$ for each $\tau$-symbol $R$ (where tree $\tau$-structures $\rel V_1$ and $\rel R_1$ are as in Definition~\ref{def:small-structures}).
\end{proof}

It is open whether the condition in Theorem~\ref{thm:adjoints-and-duals} is also sufficient to have a right adjoint. In this paper, as well as Foniok-Tardif do in \cite{FT15}, we focus on the cases when $\rel P$ and $\rel Q_R$'s are actually trees, and we give a concrete construction of the adjoint in two cases: $\rel P = \rel V_1$ (this is shown in Section~\ref{sec:adjoint-1}), and $\rel P = \rel S_1$ is the tree with a single $S$-edge for some $\sigma$-symbol $S$  (this is shown in Section~\ref{sec:adjoint-edge}). Finally, in the last section, we combine these two constructions to prove that if $\rel P$ and $\rel Q_R$'s are trees, and, moreover, the images of $\rel P$ in $\rel Q_R$ under the maps $\epsilon_{i,R}$ are either disjoint or intersect in one vertex, then the corresponding central Pultr functor has a right adjoint (Theorem~\ref{thm:7.1}).

Finally, let us note a well-known fact from category theory that if  $\Omega_1$ and $\Omega_2$ are both right adjoints to $\Gamma$, then $\Omega_1(\rel A)$ and $\Omega_2(\rel A)$ are homomorphically equivalent for all structures $\rel A$. The proof is immediate from the definitions: both structures allow a homomorphism from $\rel B$ if and only if $\Gamma(\rel B) \to \rel A$.
 
\section{An inductive construction of trees}
    \label{sec:terms}

For our constructions, it will be convenient to describe $\tau$-trees by certain formal terms. Our terms will correspond to trees that are rooted, either in a vertex or in an edge (i.e., a tuple in one of the relations). Each term will correspond to a unique inductive construction of a tree, but the same tree can be obtained by several inductive constructions.

We recall that we assume that none of the relational signatures uses $V$ as a relational symbol. Each term will be either a $V$-term ($V$ stands for \emph{vertices}) or an $R$-term where $R$ is a relational symbol in $\tau$. The rules for the inductive construction of terms and their semantics are as follows:

\begin{itemize}
    \item $\vertex$ is a $V$-term.
    \item If $t_1,\ldots,t_k$ are $V$-terms and $R$ is a $k$-ary symbol in $\tau$, then $\edge_R(t_1,\ldots,t_k)$ is an $R$-term.
    \item If $t$ is an $R$-term and $i\in \{ 1, \dots, \ar R\}$, then $\pr_i(t)$ is a $V$-term.
\end{itemize}

The rooted tree $(\relT t,r_t)$, where $r_t$ is the root, corresponding to a term $t$ is defined in the following way:
\begin{itemize}
    \item $\relT \vertex$ is the one-vertex tree, rooted at its only vertex, i.e., $\relT \vertex = \rel V_1$ and $r_t = 1$.\item If $t = \edge_R(t_1,\ldots,t_k)$ is an $R$-term, then $\relT t$ is the tree obtained by taking the disjoint union of the trees $\relT{t_i}$ (with roots $r_{t_i}$) and adding the tuple $(r_{t_1}, \dots, r_{t_k})$ to the relation $R$. The root $r_t$ is defined to be this new tuple.
    \item If $t = \pr_i(s)$ is a $V$-term and $r_s = (v_1, \dots, v_k)$, then we set $\relT t = \relT s$ and $r_t = v_i$.
\end{itemize}
The domain of $\relT t$ is denoted by $T(t)$.

We say that $t$ \emph{represents a tree $\rel T$} if $\relT t$ is isomorphic to $\rel T$, and that $t$ \emph{represents a tree $\rel T$ rooted in $r$} if $\relT t$ is isomorphic to $\rel T$ via an isomorphism mapping $r_t$ to $r$. The same tree can in general be represented by several terms (see the following example), but each term represents a unique tree up to isomorphism.

\begin{example} \label{ex:terms-and-trees}
\begin{figure}
\begin{gather*}
  \begin{matrix}
    \begin{tikzpicture}[every node/.style = {circle, inner sep=1.3, outer sep=1},
        baseline = {([yshift=-.8ex]current bounding box.center)}]
      \node [draw] (b1) at (1,0) {};
      \node [draw] (b2) at (2,0) {};
      \node [draw] (b3) at (3,0) {};
      \draw [->] (b1) -- (b2);
      \draw [->, very thick, red] (b2) -- (b3);
    \end{tikzpicture}
    &
    \begin{tikzpicture}[every node/.style = {circle, inner sep=1.3, outer sep=1},
        baseline = {([yshift=-.8ex]current bounding box.center)}]
      \node [draw] (b2) at (1,0) {};
      \node [draw] (b1) at (2,0) {};
      \node [draw] (b3) at (3,0) {};
      \draw [->] (b2) -- (b1);
      \draw [->, very thick, red] (b3) -- (b1);
    \end{tikzpicture}
    \\*
    \edge_E(\pr_2 \edge_E(\vertex, \vertex), \vertex) &
    \edge_E(\vertex, \pr_2 \edge_E(\vertex, \vertex))
  \end{matrix} \\
\begin{tikzpicture}[every node/.style = {circle, inner sep=1.3, outer sep=1},
        baseline = {([yshift=-.8ex]current bounding box.center)}]
      \node [draw] (1) at (1,0) {};
      \node [draw] (2) at (0,0) {};
      \node [draw] (3) at (2,0) {};
      \node [draw] (4) at (-.5,-.866) {};
      \node [draw] (5) at (-.5,.866) {};
      \draw [->, very thick, red] (2) -- (1);
      \draw [->] (3) -- (1);
      \draw [->] (4) -- (2);
      \draw [->] (2) -- (5);
    \end{tikzpicture} \\*
    \edge_E\bigl(
        \pr_1\edge_E(\pr_2\edge_E(\vertex, \vertex), \vertex),
        \pr_2\edge_E(\vertex, \vertex)\bigr)
\end{gather*}
  \caption{Examples of terms and trees they represent.}
  \label{fig:trees}
\end{figure}
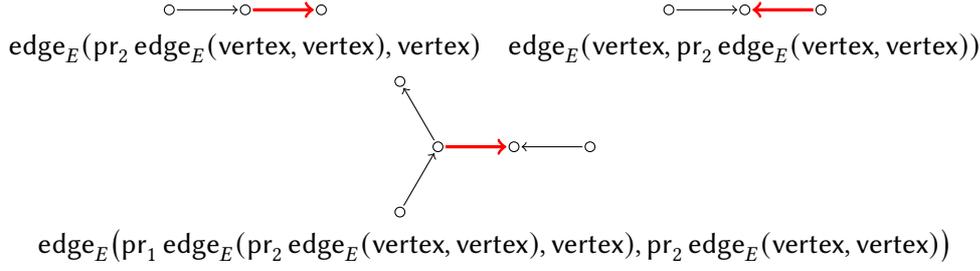

The following are examples of terms and the trees they represent in the relational signature of digraphs. The corresponding roots are highlighted. We drop the brackets around arguments of $\pr_i$ to ease readability.
  \[
  \begin{matrix}
    \begin{tikzpicture}[every node/.style = {circle, inner sep=1.3, outer sep=1},
        baseline = {([yshift=-.8ex]current bounding box.center)}]
      \node [fill=red] (b2) at (0,0) {};
    \end{tikzpicture}
    &
    \begin{tikzpicture}[every node/.style = {circle, inner sep=1.3, outer sep=1},
        baseline = {([yshift=-.8ex]current bounding box.center)}]
      \node [draw] (b2) at (0,0) {};
      \node [draw] (b4) at (-1,0) {};
      \draw [->, red, very thick] (b4) -- (b2);
    \end{tikzpicture}
    &
    \begin{tikzpicture}[every node/.style = {circle, inner sep=1.3, outer sep=1},
        baseline = {([yshift=-.8ex]current bounding box.center)}]
      \node [fill=red] (b2) at (0,0) {};
      \node [draw] (b4) at (-1,0) {};
      \draw [->] (b4) -- (b2);
    \end{tikzpicture}
    \\
    \vertex &
    \edge_E(\vertex, \vertex) &
    \pr_2 \edge_E(\vertex, \vertex)
  \end{matrix} \\
\]
Note that the difference between the second tree and the third tree is the root; the latter tree is obtained from the other by applying $\pr_2$, and thus only changing the root. Further, we present a few examples of more complicated trees in Fig.~\ref{fig:trees}.

We remark that a tree can be represented by multiple terms even when we fix the root. For example, the last tree in Fig.~\ref{fig:trees} can be also represented by the term
\[
    \edge_E\bigl(
        \pr_2\edge_E(\vertex, \pr_1\edge_E(\vertex, \vertex)),
        \pr_2\edge_E(\vertex, \vertex)\bigr).
\]
\end{example}

\begin{lemma}
  Fix a relational signature $\tau$. Any finite $\tau$-tree can be represented by a term. More precisely,
  \begin{itemize}
    \item for every finite tree $\rel T$ and $r\in T$, there is a $V$-term $t$ such that $\rel T(t)$ is isomorphic to $\rel T$ via an isomorphism that maps $r_t$ to~$r$, and
    \item for every finite tree $\rel T$ and $r\in R^{\rel T}$ for some $\tau$-symbol $R$, there is an $R$-term $t$ such that $\rel T(t)$ is isomorphic to $\rel T$ via an isomorphism that maps $r_t$ to~$r$.
  \end{itemize}
\end{lemma}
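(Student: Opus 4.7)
The plan is to prove both statements by simultaneous strong induction on the size $n(\rel T) := |T| + \sum_R |R^{\rel T}|$. The base case $n(\rel T) = 1$ is immediate: by connectedness of the incidence graph, $\rel T$ consists of a single vertex $r$ with all relations empty and is represented by $t = \vertex$; the $R$-term case does not arise here.

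The entire inductive step will rest on one geometric observation: if $e = (a_1, \dots, a_k) \in R^{\rel T}$ is any edge of a $\tau$-tree $\rel T$, then the $a_i$ are pairwise distinct (else the incidence graph would contain a 2-cycle) and deleting the incidence-graph vertex corresponding to $e$ splits the incidence tree into exactly $k$ components. Each component is the incidence graph of a uniquely determined $\tau$-subtree $\rel T_i$ containing $a_i$, and $n(\rel T_i) < n(\rel T)$.

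Using this, the $R$-term case becomes direct: given $r = (a_1, \dots, a_k) \in R^{\rel T}$ I decompose $\rel T$ along $r$, apply the induction hypothesis for the $V$-term statement to each $(\rel T_i, a_i)$ to obtain $V$-terms $t_i$, and then $\edge_R(t_1, \dots, t_k)$ represents $(\rel T, r)$ straight from the definition of $\relT t$. For the $V$-term case with root $r \in T$, if $r$ lies in no tuple then connectedness forces $\rel T = \rel V_1$ and $t = \vertex$ works; otherwise I pick any edge $e = (a_1, \dots, a_k) \in R^{\rel T}$ with $a_l = r$, decompose as above, obtain $V$-terms $t_i$ representing $(\rel T_i, a_i)$ by induction, and take $t := \pr_l(\edge_R(t_1, \dots, t_k))$, whose root $r_t = r_{t_l}$ corresponds precisely to $a_l = r$.

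The only real technicality I anticipate is verifying the decomposition observation itself — that removing the vertex of an edge from the incidence graph of a relational tree really does split it into exactly $k$ pieces that each read back as a legitimate $\tau$-subtree rooted at the appropriate coordinate. This is a standard fact about trees in graph theory combined with the elementary observation that tuples in a relational tree have no repeated entries, so I do not expect it to be a serious obstacle; the remainder is a routine bookkeeping exercise with the term grammar.
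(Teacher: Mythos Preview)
Your proposal is correct and follows essentially the same approach as the paper: both argue by induction, decompose a tree along a chosen edge into $k$ subtrees, recurse to obtain $V$-terms for the pieces, and reassemble with $\edge_R$ and $\pr_l$. The only cosmetic differences are that the paper inducts on the number of edges (carefully ordering the edge-rooted case before the vertex-rooted case at the same $n$) whereas you induct on total size, and the paper's vertex-rooted step invokes the edge-rooted step rather than inlining the decomposition as you do.
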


The proof is a simple argument by induction on the number of edges of the tree. We include it in detail to provide more intuition about terms.

\begin{proof}
  We prove this statement by induction on the number of edges of $\rel T$. Each induction step is moreover split in two: we first prove that trees with $n$ edges rooted in an edge can be represented, and then, assuming the above, we show that trees with $n$ edges rooted in a vertex can be represented.

  \begin{enumerate}
    \item We start with $n = 0$. There is a single tree $\rel T$ with no edges, namely the tree with one vertex. It cannot be rooted in an edge, but it can be rooted in its only vertex. It is represented by the term $\vertex$.
    \item Assume $n > 0$, and we can represent all trees rooted in a vertex with less than $n$ edges. Assume that $\rel T$ has $n$ edges and is rooted in an edge $(r_1,\dots, r_k) \in R^{\rel T}$ for some $k$-ary symbol $R$. Removing this edge from $\rel T$ splits $\rel T$ into $k$ connected components $\rel T_1, \dots, \rel T_k$, where $\rel T_i$ contains $r_i$ for each $i$. Each of these components is a tree. Assuming that $\rel T_i$ rooted in $r_i$ is represented by the term $t_i$ for each $i$, $\rel T$ rooted in $(r_1,\dots, r_k)$ is represented by $\edge_R(t_1,\dots, t_k)$.
    \item Assume that $n > 0$, and we can represent all trees rooted in an edge with at most $n$ edges. Let $\rel T$ be a tree with $n$ edges and $r \in T$. Since $\rel T$ is connected, $r$ is involved in some edge, say $(v_1,\dots, v_k)\in R^{\rel T}$ where $k = \ar R$ and $r = v_i$ for some $i$. Now, $\rel T$ rooted in $(v_1, \dots, v_k)$ is represented by an $R$-term $t$ by the inductive assumption, so $\rel T$ rooted in $r$ is represented by $\pr_i(t)$. \qedhere
  \end{enumerate}
\end{proof}

Finally, our constructions use the notion of a \emph{subterm} of a term. Intuitively a subterm of a term $t$ is any proper term that appears as a part of $t$. The set of all subterms of $t$ encodes all intermediate byproducts of the inductive construction of $\relT t$. Formally, we define subterms as follows.
\begin{itemize}
    \item The only subterm of $\vertex$ is itself.
    \item If $t = \edge_R(t_1,\ldots,t_k)$ is an $R$-term, where $t_1,\ldots,t_k$ are $V$-terms and $R$ is a $k$-ary symbol in $\tau$, then the set of its subterms consists of the term itself and all subterms of $t_1,\ldots,t_k$.
    \item If $t = \pr_i(s)$ is a $V$-term, where $s$ is an $R$-term, then the set of its subterms consists of the term itself and all subterms of $s$.
\end{itemize}

If $s$ is a subterm of $t$, then we write $s\leq t$, moreover, if $s$ is a proper subterm of $t$, i.e., it is a subterm and $s\neq t$, we write $s < t$. A $V$-subterm of a term $t$ is a subterm which is a $V$-term, and similarly an $R$-subterm for a relational symbol $R$ is a subterm which is an $R$-term.

For example, the term
\[
    t = \edge_E\bigl(\pr_1(\edge_E(\vertex, \vertex)), \pr_1(\edge_E(\vertex, \vertex))\bigr)
\]
has four distinct subterms: two $E$-terms, which are $t$ and $\edge_E(\vertex, \vertex)$, and two $V$-terms, which are $\pr_1(\edge_E(\vertex, \vertex))$ and $\vertex$.

We note that statements about terms can be proven by an inductive principle: showing the statement is true for the term $\vertex$, and then showing that if it is true for all proper subterms of a term $t$, it is also true for $t$.
 
\section{Prelude: Duals to trees}
  \label{sec:duals}

Before we get to the main construction of adjoints, let us briefly discuss a simpler construction of a dual of a tree. There are a few similarities between the construction of duals and right adjoints to central Pultr functors: as we mentioned before (see the proof of Theorem~\ref{thm:adjoints-and-duals}), if $\Gamma$ is a central Pultr functor that has a left adjoint $\Lambda$ and a right adjoint $\Omega$, and $(\rel T, \rel D)$ is a duality pair, then $(\Lambda(\rel T), \Omega(\rel D))$ is also a duality pair. Moreover, our construction of the dual uses the inductive construction of trees from the previous section in a similar way as the constructions of right adjoints in Sections~\ref{sec:adjoint-1} and \ref{sec:adjoint-edge}, but the construction of a dual is conceptually easier, so we present it to create some intuition that will be useful below. We also note that we show an explicit connection between constructions of a dual and the construction of a right adjoint in Section~\ref{sec:adjoints-and-duals}.

Again, we fix a relational signature.
Recall that a pair $(\rel T, \rel D)$ of similar structures is called a duality pair if, for all structures $\rel A$ similar to $\rel T$, either $\rel T \to \rel A$ or $\rel A \to \rel D$. In this case, $\rel D$ is called a dual to $\rel T$, and it was shown in \cite{NT00} that a structure has a dual if and only if it is homomorphically equivalent to a tree.
See \cite{LLT07,NT05} for other characterisations of finite duality.
Our construction is loosely inspired by the construction in \cite{NT05}.

Another way to look at duals is that for any structure $\rel A$, a homomorphism $\rel A \to \rel D$ should correspond to a~`\emph{proof}' that $\rel T \not\to \rel A$. How can one prove that a tree does not map to a structure $\rel A$ if that is the case? This can be done by an inductive argument. Let us outline this argument in the case $\rel T$ and $\rel A$ are digraphs. More precisely, we describe a procedure that shows that, for a fixed root $r \in T$ and some $a\in A$, there is no homomorphism from $\rel T$ to $\rel A$ that maps $r$ to $a$. Pick a neighbour $s$ of $r$ in $\rel T$, and assume that $(r,s) \in E^{\rel T}$, the other orientation is dealt with symmetrically. We can show that there is no homomorphism from $\rel T$ to $\rel A$ mapping $r$ to $a$ by showing that, for none of the neighbours $b$ of $a$ in $\rel A$, there is a homomorphism $\rel T \to \rel A$ that maps the edge $(r,s)$ to $(a,b)$. In turn, removing the edge $(r,s)$ from $\rel T$ splits the tree into two subtrees $\rel T_1$, containing $r$, and $\rel T_2$, containing $s$. A homomorphism $\rel T \to \rel A$ that maps $(r,s)$ to $(a,b)$ is equivalent to a pair of homomorphisms $\rel T_1 \to \rel A$, that maps $r$ to $a$, and $\rel T_2 \to \rel A$, that maps $s$ to $b$. We have thus reduced the claim to proving that there is no homomorphism from (at least) one of the two smaller trees, and can therefore recursively repeat our strategy for these two smaller trees. We further design a structure $\rel D$, in which we can encode proofs of the above form.
In particular, the image of an element $a \in \rel A$ under a homomorphism $\rel A \to \rel D$ will contain answers to questions of the form `Is there a homomorphism $\rel T' \to \rel A$ that maps $r$ to $a$?' for all trees $\rel T'$ and all roots $r\in T'$ that would appear in the above inductive argument.

\begin{definition} \label{def:duals}
  Let $t_Q$ be an $S$-term for some relational symbol $S$.  Let $\trees_V$ be the set of all $V$-subterms of $t_Q$, and let $\trees_R$ be the set of all $R$-subterms of $t_Q$ for each relational symbol $R$.
  We define a structure $\rel D(t_Q)$.

  The domain of this structure, denoted by $D(t_Q)$, is the set of all tuples $v\in \{ \true, \false \}^{\trees_V}$, indexed by $V$-subterms of $t_Q$, such that $v_{\vertex} = \true$.

  To define edges, we introduce the following notation. For terms $t_1, \dots, t_k \in \trees_V$, a relational symbol $R$ of arity $k$, and $v^1, \dots, v^k \in D(t_Q)$, we let
  \[
    \w_{\edge_R(t_1, \dots, t_k)}(v^1,\dots, v^k) = v^1_{t_1} \wedge \dots \wedge v^k_{t_k}.
  \]
  A tuple $(v^1,\dots, v^k)$ of vertices is related in a relation $R$ (of arity $k$) in $\rel D(t_Q)$ if
  \begin{enumerate}
    \item[\normalfont (D1)]  For all $t\in \trees_S$ and $i\in \{1,\dots, k\}$ such that $\pr_i(t) \in \trees_V$, we have
      \[
        \w_t(v^1,\dots, v^k) \impl v^i_{\pr_i(t)}.
      \]
    \item[\normalfont (D2)] If $R = S$, i.e., if $t_Q$ is an $R$-term, then
      \[
        \w_{t_Q}(v^1,\dots, v^k) = \false.
      \]
  \end{enumerate}
\end{definition}

If $e = (v_1, \dots, v_k)$, we will often write $\w_t(e)$ instead $\w_t(v_1, \dots, v_k)$.
Note that if $e \in R^{\rel D(t_Q)}$, then we have a tuple $e^* \in \{ \true, \false \}^{\trees_R}$ defined by $e^*_t = \w_t(e)$, which satisfies $e^*_{t_Q} = \false$ (assuming $t_Q$ is an $R$-term).
This draws a parallel to how vertices are defined.
Finally, note that item (D1) is essentially quantified by the $V$-subterms of $t_Q$, since all of such subterms $t'$, with the exception $t' = \vertex$, are of the form $\pr_i(t)$ for some $t$, and $t$ and $i$ are uniquely defined by $t'$.

\begin{theorem} \label{thm:dual}
  For any relational structure $\rel Q$ that is homomorphically equivalent to $\rel T(t_Q)$ for some $S$-term $t_Q$ where $S$ is a relational symbol, $\rel D(t_Q)$ is a dual of $\rel Q$.
\end{theorem}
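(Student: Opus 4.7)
The plan is to verify the two conditions of duality: for every structure $\rel A$ similar to $\rel Q$, either $\rel Q \to \rel A$ or $\rel A \to \rel D(t_Q)$. Since $\rel Q$ is homomorphically equivalent to $\relT{t_Q}$, it suffices to establish this disjunction with $\relT{t_Q}$ in place of $\rel Q$. I would prove the two halves separately: an \emph{existence} direction (if $\relT{t_Q}\not\to\rel A$, construct a homomorphism $\rel A\to\rel D(t_Q)$) and an \emph{incompatibility} direction (it cannot happen that $\relT{t_Q}\to\rel A$ and $\rel A\to\rel D(t_Q)$ simultaneously).

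For the existence direction, assume $\relT{t_Q}\not\to\rel A$ and define $\phi\colon A\to D(t_Q)$ by setting $\phi(a)_t=\true$ precisely when there is a homomorphism $\relT{t}\to\rel A$ sending $r_t$ to $a$. Well-definedness of $\phi$ as a map into $D(t_Q)$ is immediate: $\phi(a)_\vertex=\true$ because $\relT{\vertex}=\rel V_1$ trivially maps to $\rel A$ by $1\mapsto a$. To show $\phi$ is a homomorphism, fix $(a_1,\dots,a_k)\in R^{\rel A}$ and check (D1) and (D2) for $(\phi(a_1),\dots,\phi(a_k))$. For (D1) applied to an $R$-subterm $\edge_R(t_1,\dots,t_k)$ of $t_Q$, if $\w_{\edge_R(t_1,\dots,t_k)}(\phi(a_1),\dots,\phi(a_k))=\true$ then for each $j$ there is a homomorphism $h_j\colon\relT{t_j}\to\rel A$ with $h_j(r_{t_j})=a_j$; taking the disjoint union of the $h_j$ and mapping the new root edge to $(a_1,\dots,a_k)$ gives a homomorphism $\relT{\edge_R(t_1,\dots,t_k)}\to\rel A$, witnessing $\phi(a_i)_{\pr_i(\edge_R(t_1,\dots,t_k))}=\true$. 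For (D2), the same combining trick applied to $t_Q=\edge_S(s_1,\dots,s_k)$ shows $\w_{t_Q}(\phi(a_1),\dots,\phi(a_k))=\true$ would produce a homomorphism $\relT{t_Q}\to\rel A$, contradicting the assumption.

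For the incompatibility direction, assume homomorphisms $g\colon\relT{t_Q}\to\rel A$ and $f\colon\rel A\to\rel D(t_Q)$ both exist. I would prove by structural induction on $V$-subterms $t$ of $t_Q$ the invariant: \emph{for every homomorphism $h\colon\relT{t}\to\rel A$, one has $f(h(r_t))_t=\true$.} The base case $t=\vertex$ is immediate from the definition of vertices in $\rel D(t_Q)$. In the inductive step $t=\pr_i(s)$ with $s=\edge_R(t_1,\dots,t_k)$, note $\relT{t}=\relT{s}$, and set $a_j=h(r_{t_j})$, so that $(a_1,\dots,a_k)\in R^{\rel A}$. Restricting $h$ to each $\relT{t_j}$ and applying the inductive hypothesis yields $f(a_j)_{t_j}=\true$ for every $j$, hence $\w_s(f(a_1),\dots,f(a_k))=\true$. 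Since $f$ preserves $R$, the tuple $(f(a_1),\dots,f(a_k))$ is an $R$-edge of $\rel D(t_Q)$, so (D1) forces $f(a_i)_{\pr_i(s)}=\true$, as required. Applying exactly the same combining argument to the root $S$-edge of $\relT{t_Q}$ gives $\w_{t_Q}(f(g(r_{s_1})),\dots,f(g(r_{s_k})))=\true$, which together with (D2) for the $S$-edge $(f(g(r_{s_1})),\dots,f(g(r_{s_k})))$ of $\rel D(t_Q)$ yields the desired contradiction.

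The main subtlety is setting up the induction cleanly: terms alternate between two kinds ($V$-terms and $R$-terms), so the induction hypothesis must be phrased on $V$-subterms while the matching statement for $R$-subterms is recovered as an auxiliary consequence by gluing together the $V$-subterm instances along the root edge. Once this bookkeeping is right, every step of the argument reduces to directly unpacking the inductive construction of $\relT{t}$ together with the defining conditions (D1) and (D2).
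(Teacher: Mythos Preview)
Your proposal is correct and follows essentially the same approach as the paper's proof: define the canonical map $\phi(a)_t = \true$ iff $\relT t \to \rel A$ with root sent to $a$, then verify (D1), (D2); and for the converse, prove by structural induction that any homomorphism into $\rel D(t_Q)$ forces truth at the root coordinate, contradicting (D2). The only cosmetic differences are that the paper phrases the inductive claim directly for homomorphisms $\relT t \to \rel D(t_Q)$ (rather than for $h\colon \relT t \to \rel A$ composed with $f$) and states it simultaneously for $V$-terms and $R$-terms, whereas you track only $V$-subterms and recover the $R$-term case by gluing; both organizations are equivalent.
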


\begin{proof}
  Without loss of generality, we may assume that $\rel Q = \rel T(t_Q)$.  We need to show that, for all structures $\rel A$, $\rel A\to \rel D(t_Q)$ if and only if $\rel Q\not\to \rel A$.

  First, assume that $\rel Q \not\to \rel A$. We define $f\colon A \to D(t_Q)$ by putting, for each $u\in A$ and each $t\in \trees_V$, $f(u)_t = \true$ if there is a homomorphism $h\colon \rel T(t) \to \rel A$ that maps the root to $u$, and $f(u)_t = \false$ otherwise. Clearly, $f(u)_{\vertex} = \true$.
  Now, assume that $e = (u_1, \dots, u_k) \in R^{\rel A}$.
  Observe that $\w_t(f^R(e))$ is true if and only if there is a homomorphism $h\colon \rel T(t) \to \rel A$ mapping the root edge to $e$.  In other words, there is a homomorphism $h\colon \rel T(t) \to \rel A$ mapping the root edge to $e$ if and only if there are homomorphisms $h_i \colon \rel T(t_i) \to \rel A$ with $h_i(r_{t_i}) = u_i$ for all $i$. Assuming that such a homomorphism $h$ exists, the homomorphisms $h_i$ are defined as restrictions of $h$ to the corresponding subtrees. Assuming homomorphisms $h_i$ exist, taking the union of $h_i$ defines a mapping $h$ on all vertices of $\rel T(t)$. This mapping clearly preserves all edges different from the root, since $h_i$ are homomorphisms, and it also preserves the root edge, since it is mapped to $e\in R^{\rel A}$. This means that $h$ is indeed a homomorphism. We need to show that $f^R(e)$ satisfies (D1) and (D2).  For (D1), we want
  \[
    \w_t(f^R(e)) \impl f(u_i)_{\pr_i(t)},
  \]
  i.e., if there is a homomorphism $\rel T(t)\to \rel A$ mapping the root to $e$ then there is a homomorphism $\rel T(\pr_i(t)) \to \rel A$ mapping the root to $u_i$. This is trivial, since $\rel T(t) = \rel T(\pr_i(t))$ and a homomorphism $h\colon \rel T(t)\to \rel A$ that maps the root $r_t$ to $(u_1,\dots, u_k)$ necessarily maps $r_{\pr_i(t)}$ to $u_i$.
  Finally, (D2) is clear from the definition, since we assumed that $\rel Q \not\to \rel A$.

  For the other implication, we first prove the following by induction on the term $t \leq t_Q$:
  \begin{claim} \label{claim:duals}
    Let $t \leq t_Q$ and let $h\colon \rel T(t) \to \rel D(t_Q)$ be a homomorphism. Then $h(r_t)_t = \true$ if $t$ is a $V$-term, and $\w_t(h^R(r_t)) = \true$ if $t$ is an $R$-term.
  \end{claim}

  \begin{enumerate}
    \item The case $t = \vertex$ is trivial.
    \item Let $t = \edge_R (t_1, \dots, t_{\ar R})$. We assume that $h\colon \rel T(t) \to \rel A$ is a homomorphism and $h(r_t) = (v^1, \dots, v^{\ar R})$. This in particular means that, for each $i$, $h$ maps the root of $\rel T(t_i)$ to $v^i$. Hence, we can apply the inductive assumption on the restrictions of $h$ to $\rel T(t_i)$'s to get that $v^i_{t_i} = \true$ for all $i$, and consequently the claim by the definition of $\w_t(h^R(r_t))$.
    \item Let $t = \pr_{i,R} (t')$, and $h\colon \rel T(t) \to \rel A$. Note that $\rel T(t) = \rel T(t')$, so $h$ is also a homomorphism from $\rel T(t')$. Let $h^R(r_{t'}) = (v^1, \dots, v^k)$, and observe that $h(r_t) = v^i$. By the inductive assumption, this implies that $\w_{t'}(h^R(r_{t'})) = \w_{t'}(v^1, \dots, v^k) = \true$. Consequently, we get that $h(r_t)_t = v^i_t = \true$ from (D1).
  \end{enumerate}

  This concludes the proof of the claim. Assume for a contradiction that $f\colon \rel A \to \rel D(t_Q)$ and $g\colon \rel Q \to \rel A$ are homomorphisms. Since $\rel T(t_Q) = \rel Q$, the above claim applied to $t = t_Q$ and the homomorphism $h = f\circ g\colon \rel Q \to \rel D(t_Q)$, would imply that $\w_{t_Q}(h^R(r_{t_Q})) = \true$ which would contradict (D2).
\end{proof}

\subsection{Example: Dual to a directed path}
  \label{ex:path-dual}

It is well known (see, e.g., \cite[Proposition 1.20]{HN04}) that a directed graph maps homomorphically to the graph $\rel L_k = (\{1, \dots, k\}; <)$, where the edge relation is given by the strict order on the domain, if and only if it does not allow a homomorphism from a directed path with $k$ edges (and $k+1$ vertices) --- we denote this path by $\rel P_k$. This means that $\rel L_k$ is the dual of $\rel P_k$. Let us compare this observation with our construction of the dual.

Fix $k > 0$. We pick the term
\begin{gather*}
  t_k = \edge_E\left(\pr_2\bigl(\edge_E\bigl(\dots
      \pr_2(\edge_E(\vertex, \vertex)), \dots, \vertex\bigr)\bigr), \vertex\right),
\end{gather*}
where $\edge_E$ appears $k$ times, to represent $\rel P_k$ rooted in its last edge, i.e., the following graph.
\[
  \begin{tikzpicture}[every node/.style = {circle, inner sep=1.3, outer sep=1},
      baseline = {([yshift=-.8ex]current bounding box.center)}]
    \node [draw] (b1) at (1,0) {};
    \node [draw] (b2) at (2,0) {};
    \node [draw] (b3) at (3,0) {};
    \node [draw] (b4) at (4,0) {};
    \node [draw] (b5) at (5,0) {};
    \draw [->] (b1) -- (b2);
    \node at (2.5,0) {$\dots$};
    \draw [->] (b3) -- (b4);
    \draw [->, very thick, red] (b4) -- (b5);
  \end{tikzpicture}
\]
We also let $s_0 = \vertex$, $t_i = \edge_E(s_{i-1}, \vertex)$ and $s_i = \pr_2 (t_i)$ for $i \in \{1, \dots, k-1\}$.
Note that both $t_i$ and $s_i$ represent a path of length $i$ --- the difference is the root which is either the last edge or the last vertex.
Finally, note that $s_0$, \dots, $s_{k-1}$ and $t_1$, \dots, $t_k$ are the only subterms of $t_k$.

By definition, a vertex of $\rel D(t_k)$ is a tuple
\[
  u \in \{\true, \false \}^{\{s_0, \dots, s_{k-1}\}}
\]
such that $u_{s_0} = \true$. This allows us to write them simply as ordered $k$-tuples whose $i$-th entry is the value corresponding to $s_{i-1}$.

To check whether $u$ is connected by an edge to $v$ or not, we consider the expressions $\w_{t_i}(u, v) = u_{s_{i-1}} \wedge v_{s_0}$ for all $i$.
Using the definition, we get that $(u,v)$ is an edge if
\begin{enumerate}
  \item[(D1)] for all $i$, $\w_{t_i}(u,v) \impl v_{s_i}$ since $s_i = \pr_2(t_i)$,\footnote{The implication $\w_{t_i}(u,v) \impl v_{\pr_1(t_i)}$ is not needed since $\pr_1(t_i)$ is not a subterm of $t_k$.}
    and
  \item[(D2)] $\w_{t_k}(u,v) = \false$.
\end{enumerate} 
Since $v_{s_0} = \true$, we may simplify $\w_{t_i} (u, v) = u_{s_{i-1}}$, and substitute into the conditions above:
\begin{enumerate}
  \item $u_{s_{i-1}} \impl v_{s_i}$, for all $i<k$, and
  \item $u_{s_{k-1}} = \false$.
\end{enumerate}
In particular, observe that $u$ has no out-edge (i.e., there is no edge of the form $(u,v)$ for any $v$) if its last entry is $\true$.

It is not hard to see that $\rel L_k$ maps to a dual constructed this way. We can construct a homomorphism $h$ by mapping $i \in \{1, \dots, k\}$ to the tuple starting with $i$ $\true$'s and followed by all $\false$'s, i.e.,
\begin{align*}
  h(1) &= (\true, \false, \dots, \false) \\
  h(2) &= (\true, \true, \false, \dots, \false) \\
  \vdots\\
  h(k) &= (\true, \true, \dots, \true)
\end{align*}
Note that this is exactly the same homomorphism that is constructed in the proof of Theorem \ref{thm:dual} (assuming $\rel A = \rel L_k$). And indeed, it is easy to check that if $i < j$, then $h(i)$ and $h(j)$ satisfy the conditions for an edge given above.

Naturally, there is also a homomorphism the other way. One such homomorphism maps a tuple $u$ that begins with $i$ $\true$'s followed by a $\false$ to $i$. Again, it is easy to check that if $(u,v)$ is an edge then $v$ has to begin with one more $\true$. This establishes that our construction is homomorphically equivalent to $\rel L_k$ (as it should be).

\begin{remark}
Let us note that any homomorphism constructed according to the proof of Theorem~\ref{thm:dual} uses only the vertices in the image of $h$ above, i.e., the tuples of the form
  \[
    (\true, \dots, \true, \false, \dots,\false),
  \]
  where $\true$ appears at least once and $\false$ does not need to appear at all. This is quite easy to see: $\rel T(t_i)$ maps to $\rel T(t_j)$ for all $i < j$ via a homomorphism preserving the roots, hence, for any $u$ in the image, we get that if $u_{t_j} = \true$ then $u_{t_i} = \true$ for all $i < j$.

  We could force similar implications in the definition of the dual by requiring that $u_t \impl u_s$ whenever there is a homomorphism $\rel T(s) \to \rel T(t)$ preserving roots. We did not include this condition in the definition because our goal is to get a simple construction and not necessarily that the construction results in the smallest graph possible. Nevertheless, this raises a question: Would it be possible by enforcing such implications on our general construction to produce a dual that would be a \emph{core} (i.e., a structure that is not homomorphically equivalent to any of its proper substructures)?
\end{remark}

Finally, we note that our construction of the dual of a tree can be naturally extended to any (finite) tree duality, i.e., given a finite set of finite trees $\mathscr F = \{\rel T_1, \dots, \rel T_n\}$, we can construct their dual $\rel D$ that will satisfy that, for any structure $\rel A$, $\rel A \to \rel D$ if and only if for all $i = 1, \dots, n$, $\rel T_i \not\to \rel A$.
This $\rel D$ is constructed as in Definition~\ref{def:duals} with the following changes: Assume that a term $t_i$ represents $\rel T_i$ rooted in an edge for each $i$ (we are assuming that none of $\rel T_i$'s consists of a single vertex). Let $\mathscr T$ be the set of all subterms of any of the $t_i$'s. We use notation $\trees_V$ and $\trees_R$ (where $R$ is a relational symbol) for the $V$-terms and $S$-terms, respectively, that belong to this set.
Finally, replace the condition (D2) with
\begin{enumerate}
  \item[\normalfont (D2')] $\w_{t_i}(e) = \false$ for all $i = 1, \dots, n$ (naturally, this only applies if $t_i$ is an $R$-term).
\end{enumerate}
We denote a structure constructed this way by $\rel D(t_1, \dots, t_n)$.

\begin{theorem}
  Let $q_1, \dots, q_n$ be terms for each $i \in \{1, \dots, n\}$, and assume that $q_i$ is an $R_i$-term, where $R_i$ is a relational symbol. For each structure $\rel A$, we have that either there exists $i$ such that $\relT {q_i} \to \rel A$ or $\rel A \to \rel D(q_1, \dots, q_n)$.
\end{theorem}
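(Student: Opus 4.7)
The plan is to adapt the proof of Theorem~\ref{thm:dual} essentially verbatim, with the only changes being that the inductive claim must now range over subterms of \emph{any} of the $q_i$'s, and the obstruction coming from (D2') handles all of the forbidden trees simultaneously. Concretely, I would prove the contrapositive form: if $\relT{q_i}\not\to \rel A$ for every $i\in\{1,\dots,n\}$, then there exists a homomorphism $f\colon \rel A\to \rel D(q_1,\dots,q_n)$, and conversely, if such an $f$ exists, then no $\relT{q_i}$ maps to $\rel A$.

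For the first direction, I would define $f\colon A\to D(q_1,\dots,q_n)$ in exactly the same way as in the proof of Theorem~\ref{thm:dual}: for $u\in A$ and $t\in \trees_V$, set $f(u)_t = \true$ if there is a homomorphism $\relT t\to \rel A$ sending the root to $u$, and $\false$ otherwise. The condition $f(u)_{\vertex} = \true$ is immediate. To verify that $f$ preserves each relation $R$, fix an edge $e = (u_1,\dots,u_k)\in R^{\rel A}$. The same observation as before applies: for any $R$-subterm $t\in \trees_R$, $\w_t(f^R(e)) = \true$ if and only if there is a homomorphism $\relT t\to \rel A$ sending the root edge to $e$. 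Condition (D1) then follows because $\relT{\pr_i(t)} = \relT t$, and a homomorphism sending $r_t$ to $e$ automatically sends $r_{\pr_i(t)}$ to $u_i$. Condition (D2') follows because for each $i$ such that $q_i$ is an $R$-term, our assumption $\relT{q_i}\not\to \rel A$ ensures no such homomorphism exists, hence $\w_{q_i}(f^R(e)) = \false$.

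For the converse direction, I would prove the analogue of Claim~\ref{claim:duals}: for every $t\in \trees$ and every homomorphism $h\colon \relT t\to \rel D(q_1,\dots,q_n)$, we have $h(r_t)_t = \true$ if $t$ is a $V$-term, and $\w_t(h^R(r_t)) = \true$ if $t$ is an $R$-term. The induction on term structure is identical to the one in the proof of Theorem~\ref{thm:dual}: the base case $t = \vertex$ is trivial, the $\edge_R$-step assembles the inductive hypotheses coordinate-wise into $\w_t$, and the $\pr_i$-step uses (D1) to pass from the truth of $\w_t$ on an edge to the truth of $v^i_{\pr_i(t)}$. Assuming for contradiction that both $f\colon \rel A\to \rel D(q_1,\dots,q_n)$ and $g\colon \relT{q_i}\to \rel A$ exist for some $i$, applying the claim to $t = q_i$ and $h = f\circ g$ yields $\w_{q_i}(h^{R_i}(r_{q_i})) = \true$, contradicting (D2').

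I do not anticipate a genuine obstacle: the only subtlety is bookkeeping, making sure that the shared subterm set $\trees$ is used consistently in the definition of $\rel D(q_1,\dots,q_n)$ and in the inductive claim so that the induction really proceeds over all subterms appearing in \emph{any} $q_i$, and that (D2') is invoked with the correct symbol $R_i$ when the final contradiction is derived. Once this is set up, the argument is essentially the same as in the single-tree case.
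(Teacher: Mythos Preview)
Your proposal is correct and follows essentially the same approach as the paper: both directions mirror the proof of Theorem~\ref{thm:dual}, defining $f(u)_t$ via existence of a root-preserving homomorphism and invoking (D2') in place of (D2), and then reusing Claim~\ref{claim:duals} over the enlarged subterm set $\trees$ to derive the contradiction from a hypothetical composite $h = f\circ g$. The only cosmetic difference is that the paper phrases the converse as first showing $\relT{q_i}\not\to \rel D(q_1,\dots,q_n)$ and then deducing $\relT{q_i}\not\to \rel A$, whereas you compose directly; these are the same argument.
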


\begin{proof}
  The proof is essentially identical to the proof of Theorem~\ref{thm:dual}.
  First, the implication `$\relT{q_i} \not\to \rel A$ for all $i \in \{1, \dots, n\}$ implies $\rel A \to \rel D(q_1, \dots, q_n)$' is proven by the same argument as before. In particular, we define $f\colon \rel A \to \rel D(q_1, \dots, q_n)$ by $f(u)_t = \true$ if $\rel T(t) \to \rel A$ for each $t\in \trees_V$ and $u\in A$, and $f(u)_t = \false$ otherwise.
  The rest of the proof is identical except that instead of arguing that $\w_t(f^R(e))$ satisfies (D2) using $\rel Q \not\to\rel A$, we argue that $\w_t(f^R(e))$ satisfies (D2') for some $i$ using that $\relT{q_i} \not\to \rel A$.

  For the other implication, it is enough to argue that $\relT{q_i} \not\to \rel D(q_1, \dots, q_n)$ for each $i$, since then $\relT{q_i}$ does not map homomorphically to any $\rel A$ that maps to $\rel D(q_1, \dots, q_n)$.
  Observe that Claim~\ref{claim:duals} and its proof is valid in this case.  Consequently, if $h\colon \relT{q_i} \to \rel D(q_1, \dots, q_n)$ for some $i$, then $\w_{q_i}(h^{R_i}(r_{q_i})) = \true$ by the claim, which would contradict (D2').
\end{proof}
 
\section{Adjoints to functors not changing the domain}
  \label{sec:adjoint-1}

In this section, we describe the simpler of the cases of our construction of an adjoint.
We consider $(\sigma, \tau)$-Pultr templates where $\rel P = \rel V_1$ (i.e., a vertex) and, for each $\tau$-symbol $R$, $\rel Q_R$ is a $\sigma$-tree. The homomorphisms $\epsilon_{i,R}\colon \rel P \to \rel Q_R$ are given by picking elements $x_1, \dots, x_{\ar R} \in Q_R$ that are the images of the unique vertex of $\rel P$ under $\epsilon_{i,R}$ for the respective $i$'s.
Note that some of the $x_i$'s might coincide, and $\rel Q_R$ can also have other vertices.
Naturally, the elements $x_1, \dots, x_{\ar R}$ depend on the symbol $R$ which will be always clear from the context.
This means that the structure $\Gamma(\rel A)$ can be equivalently described in the following way: the universe of $\Gamma(\rel A)$ coincides with the universe of $\rel A$, and for every $\tau$-symbol $R$, we have
\begin{equation} \label{eq:r-gamma}
  R^{\Gamma(\rel A)} = \{ (r(x_1),\dots, r(x_{\ar R})) \mid r\colon \rel Q_R \to \rel A \}.
\end{equation}
Finally, in this case we have that every homomorphism $h\colon \rel A \to \rel B$ is also a homomorphism $h\colon \Gamma(\rel A) \to \Gamma(\rel B)$.

We will construct the right adjoint $\Omega$ in two steps: First, we define the set of vertices of a structure $\Omega(\rel B)$, and second, we define the edges on these vertices. Before defining edges, we introduce new notation. 

Throughout these definitions, we fix the following setting:
Fix a $(\sigma, \tau)$-Pultr template with $\rel P$ being a singleton structure with empty relations and with $\rel Q_R$ being a $\sigma$-tree for each $\tau$-symbol $R$. First, for each $R$, we pick a term $t_R$ representing $\rel Q_R$, and define $\trees$ to be the set of all subterms of any of the $t_R$'s. We use notation $\trees_V$ and $\trees_S$ (where $S$ is a $\sigma$-symbol) for the $V$-terms and $S$-terms, respectively, that belong to this set.

\begin{definition}[Vertices of $\Omega(\rel B)$]
  \label{def:vertices-of-adjoint-1}
  Let $\rel B$ be a $\tau$-structure. We define $\Omega(B)$ to be the set of all tuples
  \[
    U \in \prod_{t\in \trees_V} \subsets{\hom(\Gamma(\relT t), \rel B)}
  \]
  such that $U_{\vertex}$ is a singleton set, i.e., vertices of $\Omega(\rel B)$ are tuples $U$ indexed by $V$-terms in $\trees$, where the $t$-th entry is a set of homomorphisms from $\Gamma(\relT t)$ to $\rel B$ such that $U_\vertex$ contains exactly one homomorphism.
\end{definition}

Let us remark on the above definition. Observe that the domain of $\relT \vertex$ is $\{r_\vertex\}$, hence $U_\vertex = \{ r_\vertex \mapsto u \}$ for some $u \in B$, i.e., $U_\vertex$ selects an element $u$ of $B$.
Also note that the relations in $\Gamma(\relT t)$ will be empty whenever none of the trees $\rel Q_R$ map to $\relT t$, in which case $U_t$ is a set of functions from $T(t)$ to $B$.

Intuitively, elements of $\Omega(B)$ are elements $u$ of $B$ together with some extra information.
The additional information stores data about mappings from the subtrees of each $\rel Q_R$ to $B$ and how they interact.

To define edges, we need to introduce some notation.
Let $S$ be a relational symbol of arity $k$, $t_1, \dots, t_k \in \trees_V$, and $U^1, \dots, U^k \in \Omega(B)$.
Consider the term $t = \edge_S(t_1, \dots, t_k)$ and the tree $\relT t$. 
Since its domain $T(t)$ is the disjoint union of $T(t_1)$, \dots, $T(t_k)$, 
we may associate to every tuple $(f_1,\dots,f_k)\in U^1_{t_1}\times \dots \times U^k_{t_k}$ a unique mapping $f \colon T(t) \to B$ defined as $f = f_1 \cup \dots \cup f_k$.
We define $\e_t(U^1, \dots, U^k)$ as the set of all such mappings, i.e., we let
\[
  \e_t(U^1, \dots, U^k) = \{f_1 \cup \dots \cup f_k \mid f_1\in U^1_{t_1},\dots,f_k\in U^k_{t_k}\}.
\]
Note that $\e_t(U^1, \dots, U^k)$ is bijective to $U^1_{t_1} \times \dots \times U^k_{t_k}$.
Often, we will write simply $\e_t(E)$ instead of $\e_t(U^1, \dots, U^k)$ if $E = (U^1, \dots, U^k)$.

\begin{definition}[Edges of $\Omega(\rel B)$] \label{def:adjoint-1}
  Let $\rel B$ be a $\tau$-structure, we define a $\sigma$-structure $\Omega(\rel B)$ with universe $\Omega(B)$.
  For each $\sigma$-symbol $S$ of arity $k$, let $S^{\Omega(\rel B)}$ consist of all tuples $(U^1, \dots, U^k) \in \Omega(B)^k$ that satisfy:
  \begin{enumerate}
    \item[\normalfont (A1)] For all $t\in \trees_S$ and $i\in \{1,\dots, k\}$ such that $\pr_i(t) \in \trees_V$, we have
    \[
      \e_t(U^1, \dots, U^k) \subseteq U^i_{\pr_i(t)}.
    \]
    \item[\normalfont (A2)] For all $t\in \trees_S$,
    \[
      \e_t(U^1, \dots, U^k) \subseteq \hom(\Gamma(\relT t), \rel B).
    \]
  \end{enumerate}
\end{definition}

Let us comment on the definition.
In (A1), the trees represented by $t$ and $\pr_i(t)$ coincide, and hence both sets in the condition consist of mappings with the same domain. Item (A2) requires that the mappings that we defined by taking union of homomorphisms $\relT{t_i} \to U^i$, where $t = \edge_R(t_1, \dots, t_k)$, are actually homomorphisms.
Consequently, given that $E$ is an edge in $S^{\Omega(\rel B)}$, we may thus define a tuple
\[
  E^* \in \prod_{t\in \trees_S} \subsets{\hom(\Gamma(\relT t), \rel B)}
\]
with $E^*_t = \e_t(E)$.
The intuition behind requiring condition (A2) is that the union of homomorphisms is a homomorphism as long as the root edge of $t$ is mapped to an edge.
Observe that condition (A2) does not need to be checked if $\pr_i(t) \in \trees_V$ for some $i$ since then it follows from (A1). Nevertheless, it will be useful to talk about it separately.

We claim that this construction $\Omega$ yields a right adjoint to the considered cases of central Pultr functors $\Gamma$.

\begin{theorem} \label{thm:adjoint-1}
  Assume a $(\sigma, \tau)$-Pultr template with $\rel P$ being the $\sigma$-structure with a single vertex and empty relations, and $\rel Q_R$ being a $\sigma$-tree for all $\tau$-symbols $R$.
  Further, assume $\Gamma$ is the central Pultr functor defined by this template, and $\Omega$ is defined as in Definition~\ref{def:adjoint-1}.

  For every $\sigma$-structure $\rel A$ and $\tau$-structure $\rel B$, there is a homomorphism $\Gamma(\rel A) \to \rel B$ if and only if there is a homomorphism $\rel A \to \Omega(\rel B)$.
\end{theorem}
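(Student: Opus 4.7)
The plan is to prove the two directions of the biconditional separately, using the inductive structure of the terms $t_R$ to translate homomorphisms back and forth.

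\emph{Forward direction.} Given $g \colon \Gamma(\rel A) \to \rel B$, I will define $f \colon \rel A \to \Omega(\rel B)$ by
\[
  f(a)_t = \{\, g \circ \phi \mid \phi \colon \relT t \to \rel A \text{ is a $\sigma$-homomorphism with } \phi(r_t) = a \,\}
\]
for each $a \in A$ and $V$-subterm $t \in \trees_V$. The crucial observation is that, since $\Gamma$ does not change the underlying set, each such $\phi$ is simultaneously a $\tau$-homomorphism $\Gamma(\relT t) \to \Gamma(\rel A)$, so $g \circ \phi$ lies in $\hom(\Gamma(\relT t), \rel B)$ and the definition makes sense. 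The singleton condition on $f(a)_\vertex$ is immediate. To check that $f$ respects a $\sigma$-edge $(a_1,\dots,a_k) \in S^{\rel A}$, I fix $t = \edge_S(t_1,\dots,t_k) \in \trees_S$ and observe that, precisely because $(a_1,\dots,a_k) \in S^{\rel A}$, any tuple of $\sigma$-homomorphisms $\phi_j \colon \relT{t_j} \to \rel A$ with $\phi_j(r_{t_j}) = a_j$ glues to a single $\sigma$-homomorphism $\phi \colon \relT t \to \rel A$. This single observation delivers (A2), by the same functoriality argument as above, and (A1), by restricting $\phi$ along $\pr_i$ to land in $f(a_i)_{\pr_i(t)}$.

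\emph{Backward direction.} Given $f \colon \rel A \to \Omega(\rel B)$, I define $g(a)$ to be the unique value attained at $r_\vertex$ by the unique function in the singleton $f(a)_\vertex$. To check that $g \colon \Gamma(\rel A) \to \rel B$ is a $\tau$-homomorphism, equation~\eqref{eq:r-gamma} reduces the task to showing that, for every $r \colon \rel Q_R \to \rel A$, the composition $g \circ r$ is a $\tau$-homomorphism $\Gamma(\rel Q_R) \to \rel B$; then taking $r' = \mathrm{id}$ in \eqref{eq:r-gamma} applied to $\rel Q_R$ itself yields $(g(r(x_1)),\dots,g(r(x_{\ar R}))) \in R^{\rel B}$ as needed. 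I will prove this via the following claim, by induction on subterms $s$ of some $t_R$: for every $\sigma$-homomorphism $\rho \colon \relT s \to \rel A$ one has $g \circ \rho \in f(\rho(r_s))_s$ when $s$ is a $V$-term, and $g \circ \rho \in \e_s(f(\rho(r_{s_1})),\dots,f(\rho(r_{s_k})))$ when $s = \edge_S(s_1,\dots,s_k)$ is an $S$-term.

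The base case $s = \vertex$ is immediate from the definition of $g$. For $s = \edge_S(s_1,\dots,s_k)$, the restrictions $\rho|_{T(s_j)}$ are $\sigma$-homomorphisms $\relT{s_j} \to \rel A$, so the inductive hypothesis gives $g \circ \rho|_{T(s_j)} \in f(\rho(r_{s_j}))_{s_j}$ for each $j$, and their union is exactly $g \circ \rho$. For $s = \pr_i(s')$, the inductive hypothesis at $s'$ places $g \circ \rho$ in $\e_{s'}(\dots)$, and (A1) then applies because $f$, being a homomorphism, sends the $S$-edge $\rho(r_{s'}) \in S^{\rel A}$ to an $S$-edge of $\Omega(\rel B)$. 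Finally, applying the claim at $s = t_R$ with $\rho = r$, together with (A2) in the case that $t_R$ is an $S$-term, yields that $g \circ r$ is a $\tau$-homomorphism $\Gamma(\rel Q_R) \to \rel B$.

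The main obstacle will be keeping the bookkeeping clean: subterms alternate between $V$-type and $S$-type (with $S$ ranging over $\sigma$-symbols), and the same piece of data, the function $g \circ \rho$, must be interpreted successively as a raw map $T(s) \to B$, as a member of some component set $f(\cdot)_t$, and as a $\tau$-homomorphism $\Gamma(\relT s) \to \rel B$. Condition (A2) is precisely the hinge that upgrades a gluing of candidate maps into a genuine $\tau$-homomorphism at each $\edge_S$ step, while (A1) propagates $V$-level data across each $\pr_i$ step, and these are the two places where the hypothesis that $f$ is a $\sigma$-homomorphism is actually used.
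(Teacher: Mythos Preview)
Your proposal is correct and follows essentially the same approach as the paper: the forward map $f(a)_t = \{ g\circ\phi \mid \phi\colon \relT t \to \rel A,\ \phi(r_t)=a\}$ matches the paper's Lemma~\ref{lem:A-easy}, and your inductive claim for the backward direction is the content of Lemmas~\ref{lem:A-hard-pre} and~\ref{lem:A-hard} merged into one (you parametrise by $\rho\colon \relT s \to \rel A$ directly rather than first isolating the case $\rel A = \relT t$ and then composing). The only cosmetic difference is this merging; the inductive structure, the use of (A1) at $\pr_i$-steps and (A2) at the top, and the appeal to the identity homomorphism in \eqref{eq:r-gamma} to extract the $R$-edge are all identical to the paper.
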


We now proceed to prove the above theorem in several steps. The following lemma proves one of the implications and gives further insights to why $U$'s and $\e_t(E)$'s are defined as above.

\begin{lemma} \label{lem:A-easy}
  If there is a homomorphism $f\colon \Gamma(\rel A) \to \rel B$, then there is a homomorphism $g\colon \rel A \to \Omega(\rel B)$.
\end{lemma}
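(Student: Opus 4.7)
The plan is to define $g$ in the most direct way possible: $g(a)$ will record, for each $V$-subterm $t$ appearing in some $t_R$, the collection of all images under $f$ of those homomorphic copies of $\relT t$ in $\rel A$ whose root is pinned at $a$. Concretely, for $a \in A$ and $t \in \trees_V$, set
\[
  g(a)_t = \{\, f \circ h \mid h \colon \relT t \to \rel A \text{ is a homomorphism with } h(r_t) = a\,\}.
\]
Since $\rel P = \rel V_1$, the functor $\Gamma$ is the identity on universes, so any homomorphism $h \colon \relT t \to \rel A$ of $\sigma$-structures is also a homomorphism $\Gamma(\relT t) \to \Gamma(\rel A)$ of $\tau$-structures, whence $f \circ h$ is indeed an element of $\hom(\Gamma(\relT t), \rel B)$. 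For $t = \vertex$, there is a unique homomorphism $\rel V_1 \to \rel A$ sending the unique vertex to $a$, so $g(a)_{\vertex}$ is a singleton; hence $g(a) \in \Omega(B)$.

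To verify that $g$ is a homomorphism, I fix a $\sigma$-symbol $S$ of arity $k$ and a tuple $(a_1, \dots, a_k) \in S^{\rel A}$, and check that $(g(a_1), \dots, g(a_k))$ satisfies conditions (A1) and (A2) of Definition~\ref{def:adjoint-1}. The key observation is a simple \emph{glueing} lemma: for any homomorphisms $h_i \colon \relT{t_i} \to \rel A$ with $h_i(r_{t_i}) = a_i$, the union $h = h_1 \cup \dots \cup h_k$ is a well-defined function $T(t) \to A$ (where $t = \edge_S(t_1,\dots,t_k)$) because the universes $T(t_i)$ are pairwise disjoint, and $h$ is in fact a homomorphism $\relT t \to \rel A$: the edges inside each $\relT{t_i}$ are preserved by $h_i$, and the only additional edge of $\relT t$ is its root edge $(r_{t_1}, \dots, r_{t_k})$, whose image $(a_1, \dots, a_k)$ lies in $S^{\rel A}$ by assumption.

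Both (A1) and (A2) then follow at once. An arbitrary element of $\e_t(g(a_1), \dots, g(a_k))$ has the form $(f \circ h_1) \cup \dots \cup (f \circ h_k) = f \circ h$ for a glued $h$ as above; since $\relT{\pr_i(t)}$ is the same tree $\relT t$ with root $r_{t_i}$ and $h(r_{t_i}) = a_i$, this element lies in $g(a_i)_{\pr_i(t)}$, giving (A1). For (A2), the same $h$ is also a homomorphism $\Gamma(\relT t) \to \Gamma(\rel A)$, so $f \circ h \in \hom(\Gamma(\relT t), \rel B)$. The glueing step is the only place where the hypothesis $(a_1, \dots, a_k) \in S^{\rel A}$ enters and is the one non-routine point of the argument; everything else is bookkeeping dictated by the definitions.
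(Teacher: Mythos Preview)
Your proof is correct and follows essentially the same approach as the paper's: the same definition of $g(a)_t$, the same well-definedness checks, and the same glueing argument to verify (A1) and (A2). The paper additionally phrases the key step as an equality $\e_t(g^S(e)) = \{ f\circ h \mid h\colon \relT t \to \rel A,\ h^S(r_t) = e \}$ (using both the glueing and the restriction directions), but only the inclusion you prove is actually needed, so there is no substantive difference.
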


\begin{proof}
  We define a mapping $g\colon A \to \Omega(B)$ by
  \[
    g(u)_t = \{ f\circ h \mid h\colon \relT t \to \rel A, h(r_t) = u \}.
  \]
  We claim that this mapping is a homomorphism. First, we show that $g(u)$ is well-defined, i.e., that $g(u)_{\vertex}$ is a singleton set, and the elements of $g(u)_t$ are homomorphisms from $\Gamma(\relT{t})$ to $\rel B$. For the former, observe that $g(u)_{\vertex} = \{ r_{\vertex} \mapsto f(u) \}$, since there is a single homomorphism $h\colon \rel T(\vertex) \to \rel A$ which maps the root (and the only vertex) $r_{\vertex}$ to $u$. For the latter, assume $t$ is a $V$-term. Since $h\colon \relT t \to \rel A$ is also a homomorphism $\Gamma(\relT t) \to \Gamma(\rel A)$, and $f$ is a homomorphism $\Gamma(\rel A) \to \rel B$, we get that $f\circ h\colon \Gamma(\relT t) \to \rel B$ is  homomorphism by composition.

  To prove that $g$ preserves the relations, assume that $e = (u_1, \dots, u_k) \in S^{\rel A}$ is an edge. We first show that
  \[
    \e_t(g^S(e)) = \{ f\circ h \mid h\colon \relT t \to \rel A, h^S(r_t) = e \}
  \]
  for each $t\in \trees_S$, $t = \edge_S(t_1, \dots, t_k)$.
  This is true because, for any homomorphism $h\colon \relT t \to \rel A$ that maps the root edge to $e = (u_1, \dots, u_k)$, its restriction $h_i\colon \relT{t_i} \to \rel A$ maps the roots to the respective $u_i$ for all $i$, and, for any tuple of homomorphisms $h_i\colon \relT{t_i} \to \rel A$ which maps the roots to the respective $u_i$'s, their union is a homomorphism $\relT t\to \rel A$.

  To prove property (A1), we need to check that $\e_t(g^S(e)) \subseteq g(u_i)_{\pr_i(t)}$. This is easy to see, since any homomorphism $h\colon \relT t \to \rel A$ that maps $r_t$ to $e$ maps $r_{\pr_i(t)}$, which is the $i$-th component of $r_t$, to $u_i$.
  Finally, the property (A2), that each mapping in $\e_t(g^S(e))$ is a homomorphism, is proved in the same way as the analogous statement for $g(u)$, i.e., it follows from the above claim.
\end{proof}

The above lemma concludes one of the implications that we need for the adjunction. We turn to the other implication which we prove in two steps, each provided by one of the following two lemmas.
The first lemma proves the adjunction in the special case when $\rel A = \relT t$ for some $t\in \trees$. This will be used in the proof of the general case.

\begin{lemma} \label{lem:A-hard-pre}
  Let $t\in \trees$ and let $h\colon \relT t \to \Omega(\rel B)$ be a homomorphism.
  Then the mapping $d\colon T(t) \to B$, defined so that $d(v) = f(r_\vertex)$ where $f$ is the unique element of $h(v)_{\vertex}$, is a homomorphism $\Gamma(\relT t) \to \rel B$.
\end{lemma}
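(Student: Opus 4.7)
My plan is to establish the lemma by proving the following auxiliary claim by structural induction on $s \in \trees$:
for every homomorphism $r\colon \relT s \to \relT t$, (i) if $s$ is a $V$-term then $d \circ r \in h(r(r_s))_s$, and (ii) if $s$ is an $S$-term (for some $\sigma$-symbol $S$) then $d \circ r \in \e_s(h^S(r^S(r_s)))$. The base case $s = \vertex$ is immediate from the definition of $d$: the map $d \circ r$ sends the unique vertex $r_\vertex$ to $d(r(r_\vertex))$, which by construction equals $f(r_\vertex)$ for the single element $f \in h(r(r_\vertex))_\vertex$, so $d \circ r \in h(r(r_\vertex))_\vertex$.

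For $s = \edge_S(s_1,\dots,s_k)$, I would split $r$ into its restrictions $r_i\colon \relT{s_i} \to \relT t$ on the disjoint subtrees of $\relT s$ (each satisfying $r_i(r_{s_i}) = r(r_{s_i})$). The inductive hypothesis applied to each $r_i$ gives $d \circ r_i \in h(r(r_{s_i}))_{s_i}$, and by the definition of $\e_s$ the union $d \circ r = \bigcup_i d \circ r_i$ lies in $\e_s(h(r(r_{s_1})),\dots,h(r(r_{s_k}))) = \e_s(h^S(r^S(r_s)))$, since $h^S(r^S(r_s))$ is exactly the tuple $(h(r(r_{s_1})),\dots,h(r(r_{s_k})))$. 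For $s = \pr_i(s')$ with $s'$ an $S$-term, the fact that $\relT s = \relT{s'}$ lets me view $r$ also as a homomorphism $\relT{s'} \to \relT t$; by the inductive hypothesis, $d \circ r \in \e_{s'}(h^S(r^S(r_{s'})))$. Since $h$ is a homomorphism, $h^S(r^S(r_{s'}))$ is an edge of $\Omega(\rel B)$, so property (A1) applied to this edge and the coordinate $i$ yields $\e_{s'}(h^S(r^S(r_{s'}))) \subseteq h(r(r_s))_{\pr_i(s')} = h(r(r_s))_s$, closing the induction.

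Given the claim, the lemma follows as follows. Any $R$-edge of $\Gamma(\relT t)$ has the form $(r(x_1),\dots,r(x_{\ar R}))$ for some homomorphism $r\colon \rel Q_R = \relT{t_R} \to \relT t$. Instantiating the claim at $s = t_R$ shows that $d \circ r$ belongs to either $h(r(r_{t_R}))_{t_R}$ (if $t_R$ is a $V$-term) or $\e_{t_R}(h^S(r^S(r_{t_R})))$ (if $t_R$ is an $S$-term); by the definition of $\Omega(B)$ in the first case and by (A2) in the second, every element of this set is a homomorphism $\Gamma(\relT{t_R}) = \Gamma(\rel Q_R) \to \rel B$. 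The identity map $\rel Q_R \to \rel Q_R$ witnesses $(x_1,\dots,x_{\ar R}) \in R^{\Gamma(\rel Q_R)}$, so applying the homomorphism $d \circ r$ yields $(d(r(x_1)),\dots,d(r(x_{\ar R}))) \in R^{\rel B}$, as required.

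The main obstacle is bookkeeping rather than conceptual: one has to be careful to distinguish the root of a $V$-subterm (a vertex of $\relT s$) from the root of an $S$-subterm (a tuple), keep the action $r^S$ on edges consistent with the restrictions $r_i$ in the $\edge_S$ case, and verify that the projection step in the $\pr_i$ case triggers exactly the right instance of (A1). Condition (A2) is only invoked once, at the very end, to guarantee that the elements witnessed by the claim at $s = t_R$ are genuine homomorphisms out of $\Gamma(\rel Q_R)$.
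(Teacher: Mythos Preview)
Your proof is correct and follows essentially the same inductive strategy as the paper: structural induction on terms, with the $\pr_i$ step handled by condition (A1) and the final passage to homomorphisms handled by (A2) (or by the definition of $\Omega(B)$ in the $V$-term case).

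The packaging differs in two minor ways. First, you keep $t$ and $h$ fixed and run the induction over an auxiliary pair $(s,r)$ with $r\colon \relT s \to \relT t$, whereas the paper runs the induction directly on the term of the lemma, restricting $h$ to subtrees at each step; these are equivalent, since your claim at $(s,r)$ is exactly the paper's claim at $s$ with $h$ replaced by $h\circ r$ (and $d$ replaced by $d\circ r$). Second, your conclusion step is more elaborate than necessary: having proved the claim, you could simply take $s=t$ and $r=\mathrm{id}$, which immediately gives $d\in h(r_t)_t$ (or $d\in \e_t(h^S(r_t))$) and hence $d\in\hom(\Gamma(\relT t),\rel B)$ by the definition of $\Omega(B)$ or by (A2). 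Instead you verify each $R$-edge of $\Gamma(\relT t)$ individually via the claim at $s=t_R$; this works, but it is precisely the argument of the subsequent Lemma~\ref{lem:A-hard} specialised to $\rel A=\relT t$, so you are in effect re-proving that lemma inside this one.
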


\begin{proof}
  We prove by induction on $t$ that $d \in h(r_t)_t$ if $t$ is a $V$-term, and $d \in \e_t(h^S(r_t))$ if $t$ is an $S$-term.
  \begin{description}
    \item [Case $t = \vertex$] This is a trivial case.
    \item [Case $t = \edge_S(t_1, \dots, t_k)$] Note that restrictions of $h$ to subtrees $\relT{t_i}$ are homomorphisms, so we know that, for all $i$, $h(r_{t_i})_{t_i}$ contains the restrictions of $d$ by the inductive assumption.
    The claim then immediately follows from the definition of $\e_t(h^S(r_t))$.
    \item [Case $t = \pr_i(t')$] Since $h$ is a homomorphism from $\relT t = \relT{t'}$ to $\rel B$, we know that $d\in \e_{t'}(h^S(r_{t'}))$, and the claim subsequently follows by (A1).
  \end{description}
  The lemma then immediately follows either by the definition, if $t$ is a $V$-term, or by (A2), otherwise.
\end{proof}

Note that the induction in the above proof alternates between $V$-terms and $S$-terms. We use the above lemma to prove the general case.

\begin{lemma} \label{lem:A-hard}
  If $g\colon \rel A \to \Omega(\rel B)$ is a homomorphism, then there is a homomorphism $f\colon \Gamma(\rel A) \to \rel B$.
\end{lemma}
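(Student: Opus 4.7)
The plan is to define the map $f\colon A\to B$ directly from the vertex data recorded by $g$, and then check that $f$ preserves each $\tau$-edge of $\Gamma(\rel A)$ by a single application of Lemma~\ref{lem:A-hard-pre} to the tree $\rel Q_R = \relT{t_R}$. The underlying intuition is that Lemma~\ref{lem:A-hard-pre} already does essentially all of the inductive work; the job here is just to package an arbitrary $R$-edge of $\Gamma(\rel A)$ into a form to which that lemma applies.

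First I would use the constraint that $U_\vertex$ is a singleton for every $U\in \Omega(B)$. For each $u\in A$, write $g(u)_\vertex = \{\varphi_u\}$ and set $f(u) \coloneqq \varphi_u(r_\vertex)$. Now fix a $\tau$-symbol $R$ of arity $k$ and a tuple $(u_1,\dots,u_k)\in R^{\Gamma(\rel A)}$. By~(\ref{eq:r-gamma}), there is a homomorphism $r\colon \rel Q_R \to \rel A$ with $r(x_i) = u_i$ for each $i$, so composing with $g$ gives a homomorphism $h \coloneqq g\circ r \colon \relT{t_R}\to \Omega(\rel B)$.

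Applying Lemma~\ref{lem:A-hard-pre} to $t_R$ and $h$ yields a homomorphism $d\colon \Gamma(\relT{t_R})\to \rel B$ defined by $d(v) = \psi(r_\vertex)$, where $\psi$ is the unique element of $h(v)_\vertex = g(r(v))_\vertex$. Unwinding the definition of $f$, this simplifies to $d(v) = f(r(v))$, so in particular $d(x_i) = f(u_i)$. To finish, I would note that the identity map $\rel Q_R\to \rel Q_R$ is a homomorphism sending $x_i$ to $x_i$, so~(\ref{eq:r-gamma}) gives $(x_1,\dots,x_k)\in R^{\Gamma(\rel Q_R)}$; since $d$ preserves $R$, we conclude $(f(u_1),\dots,f(u_k)) = (d(x_1),\dots,d(x_k)) \in R^{\rel B}$, as required. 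No real obstacle is expected, since the alternating induction on $V$- and $S$-terms has already been carried out inside Lemma~\ref{lem:A-hard-pre}.
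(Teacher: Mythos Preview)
Your proposal is correct and follows essentially the same argument as the paper's proof: define $f$ from the $\vertex$-component of $g$, and for each $R$-edge of $\Gamma(\rel A)$ push the witnessing $\rel Q_R\to\rel A$ through $g$ and invoke Lemma~\ref{lem:A-hard-pre} on $t_R$, then use the identity on $\rel Q_R$ to see $(x_1,\dots,x_k)\in R^{\Gamma(\rel Q_R)}$. The only thing you leave implicit that the paper makes explicit is the harmless identification of $\rel Q_R$ with $\relT{t_R}$.
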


\begin{proof}
  Recall that, for each $\tau$-symbol $R$, $t_R$ is a fixed term representing $\rel Q_R$, and hence $\relT{t_R}$ and $\rel Q_R$ are isomorphic. We further assume (without loss of generality) that $\rel Q_R = \relT {t_R}$.
  We define $f$ by setting $f(u)$ to be the unique value attained by the single map in $g(u)_{\vertex}$.
  This is a well-defined mapping on the vertices of $\Gamma(\rel A)$. We need to show that it preserves the relations of $\Gamma(\rel A)$. To this end, assume that $R$ is a $\tau$-symbol of arity $k$ and $(u_1, \dots, u_k) \in R^{\Gamma(\rel A)}$. This means that there is a homomorphism
  \(
    h\colon \rel Q_R \to \rel A,
  \)
  s.t., $h(x_i) = u_i$ for all $i \in [k]$. Observe that
  \(
    g\circ h \colon \relT{t_R} \to \Omega(\rel B)
  \)
  is a homomorphism since it is obtained as a composition of two homomorphisms, so Lemma~\ref{lem:A-hard-pre} applies to $g\circ h$ in place of $h$ and $f\circ h$ in place of $d$ (since $fh(v)$ is the unique value attained by the single map in $gh(v)_{\vertex}$). Consequently, $f\circ h$ is a homomorphism from $\Gamma(\relT{t_R})$ to $\rel B$, and therefore
  \[
    (f(u_1), \dots, f(u_k)) = (fh(x_1), \dots, fh(x_k)) \in R^{\rel B},
  \]
  since $h(x_i) = u_i$ and $(x_1, \dots, x_k) \in R^{\Gamma(\relT{t_R})}$, where the latter follows from the definition of $R^{\Gamma(\relT{t_R})}$ as phrased in \eqref{eq:r-gamma} by letting $r$ be the identity homomorphism on $\relT{t_R}$.
\end{proof}

Lemmas \ref{lem:A-easy} and \ref{lem:A-hard} together yield Theorem~\ref{thm:adjoint-1}.

\begin{remark}
  Let us briefly compare our general construction to that of Foniok and Tardif \cite[Theorem 7.1]{FT15} assuming that both signatures are digraphs and $\rel P = \rel V_1$ (denoted by $\vec P_0$ in \cite{FT15}). An analogous comparison also applies for the construction presented in the next section. Also, examples below compare the two constructions in several concrete cases of central Pultr functors.

  First, both the domain of our right adjoint $\Omega$ and Foniok-Tardif right adjoint $\Omega_\text{FT}$ consist of some tuples indexed by some rooted subtrees of $\rel Q_E$. The first difference is in how we choose the subtrees. Foniok and Tardif choose a vertex of $\rel Q_E$ called \emph{middle vertex} which disconnects the two copies of $\rel P$ in $\rel Q_E$, and define the set of subtrees using this vertex. This is similar to choosing a term representing $\rel Q_E$ rooted in the middle vertex, although not every subtree used by Foniok and Tardif needs to be represented by a subterm.
  Furthermore, note that the requirement that the middle vertex separates the two copies of $\rel P$ creates an obstacle to generalising Foniok and Tardif's construction to relational structures with higher arity.

  Second, the elements of tuples comprising the universe of $\Omega_\text{FT}$ are subsets of vertices of $\rel B$ while elements of our tuples are sets of mappings to $B$. The main reason that Foniok and Tardif are able to do this is that the middle vertex separates the two copies of $\rel P$, and hence they only need to track the value of the mappings on one distinguished vertex. Again, since our aim is to provide adjoints in a more general setting, we cannot afford to do that.
\end{remark}

\subsection{Example: An oriented path}

In this example, we compare our construction to the construction introduced in \cite[Definition 4.1]{FT15}. Our goal is to construct the adjoint to the digraph Pultr functor $\Gamma$ defined by the Pultr template where $\rel Q_E$ is the following digraph:
\[
  \begin{tikzpicture}[vertex/.style = {circle, inner sep=1.3, outer sep=1},
      baseline = {([yshift=-.8ex]current bounding box.center)}]
    \node [draw, vertex, fill, label={above:{$x_1$}}] (v0) at (0,0) {};
    \node [draw, vertex] (v1) at (1,0) {};
    \node [draw, vertex] (v2) at (2,0) {};
    \node [draw, vertex, fill, label={above:{$x_2$}}] (v3) at (3,0) {};
    \draw [->] (v1) -- (v0);
    \draw [->] (v1) -- (v2);
    \draw [->] (v2) -- (v3);
  \end{tikzpicture}
\]
The maps $\epsilon_{1,E}$ and $\epsilon_{2,E}$ map the singleton $\rel P$ to $x_1$ and $x_2$, respectively.

Let us start by fixing a term $t_E$ representing $\rel Q_E$. Namely, we let
\[
  t_E = \edge_E(\pr_1(\edge_E(\vertex, \vertex)), \pr_1(\edge_E(\vertex, \vertex)))
\]
which represents $\rel Q_E$ rooted in the middle edge. It has two $V$-subterms and two $E$-subterms (including itself) that represent the following trees:
\begin{align*}
  s_0 = \vertex &\quad
  \begin{tikzpicture}[every node/.style = {circle, inner sep=1.3, outer sep=1},
      baseline = {([yshift=-.8ex]current bounding box.center)}]
    \node [draw=red, fill=red] (v) at (0,0) {};
  \end{tikzpicture} &
  t_1 = \edge_E(s_0, s_0) &\quad
  \begin{tikzpicture}[every node/.style = {circle, inner sep=1.3, outer sep=1},
      baseline = {([yshift=-.8ex]current bounding box.center)}]
    \node [draw] (v0) at (0,0) {};
    \node [draw] (v1) at (1,0) {};
    \draw [->, red, very thick] (v0) -- (v1);
  \end{tikzpicture} \\
  s_1 = \pr_1(t_1) &\quad
  \begin{tikzpicture}[every node/.style = {circle, inner sep=1.3, outer sep=1},
      baseline = {([yshift=-.8ex]current bounding box.center)}]
    \node [draw=red, fill=red] (v0) at (0,0) {};
    \node [draw] (v1) at (1,0) {};
    \draw [->] (v0) -- (v1);
  \end{tikzpicture} &
  t_E = \edge_E(s_1, s_1) &\quad
  \begin{tikzpicture}[every node/.style = {circle, inner sep=1.3, outer sep=1},
      baseline = {([yshift=-.8ex]current bounding box.center)}]
    \node [draw] (v0) at (0,0) {};
    \node [draw] (v1) at (1,0) {};
    \node [draw] (v2) at (2,0) {};
    \node [draw] (v3) at (3,0) {};
    \draw [->] (v1) -- (v0);
    \draw [->, red, very thick] (v1) -- (v2);
    \draw [->] (v2) -- (v3);
  \end{tikzpicture}
\end{align*}
For a directed graph $\rel H$, the definition of $\Omega(\rel H)$ is spelled out as follows. The vertices of $\Omega(\rel H)$ are pairs $U=(U_{s_0}, U_{s_1})$, where $U_{s_0}$ is the set containing the map that sends the unique vertex of $\relT \vertex$ to some $u_0\in H$ and $U_{s_1} \subseteq H^{T(s_1)}$; this is because $\Gamma(\rel T(s_1))$ has no edges since $\rel Q_E \not\to \rel T(s_1)$.
There is an edge from $U=(U_{s_0}, U_{s_1})$ to $V=(V_{s_0}, V_{s_1})$ if
\begin{enumerate}
  \item $\e_{t_1}(U, V) \subseteq U_{s_1}$, and
  \item $\e_{t_E}(U, V) \subseteq \hom(\Gamma(\rel Q_E), \rel H)$
\end{enumerate}
(the remaining condition (A2) for $t = t_1$ is trivial).
Let us simplify this definition.
First, we will write homomorphisms from the above paths as tuples, writing the values of such homomorphisms from left to right as the vertices appear on the picture above. In this way, we have $U_{s_0} \subseteq H$, $U_{s_1} \subseteq H\times H$ for each $U$.
Since $t_1 = \edge_e(s_0, s_0)$, we get a bijection $\e_{t_1} \simeq U_{s_0} \times V_{s_0}$. Furthermore, using the above ordering, we may simply say that $\e_{t_1} = U_{s_0} \times V_{s_0}$. Similarly, $\e_{t_E}(U, V) \simeq U_{s_1} \times V_{s_1}$, since $t_E = \edge_E(s_1, s_1)$, more precisely
\[
  \e_{t_E}(U, V) = \{ (u_1, u_0, v_0, v_1) \mid
    (u_0, u_1) \in U_{s_1} \text{ and } (v_0, v_1) \in V_{s_1} \}.
\]
The two conditions are then rephrased as follows:
\begin{enumerate}
  \item $U_{s_0} \times V_{s_0} \subseteq U_{s_1}$, and
  \item $(u_1, v_1) \in E^{\rel H}$ for every $(u_0, u_1) \in U_{s_1}$ and $(v_0, v_1) \in V_{s_1}$.
\end{enumerate}

We claim that this construction results (on the same input) in a digraph that is homomorphically equivalent to the one obtained by \cite[Definition 4.1]{FT15}. Namely, the adjoint constructed there, let us call it $\Omega'$, is as follows: The vertices of $\Omega'(\rel H)$ are pairs $(a, A)$, where $a\in H$ and $A\subseteq H$, and there is an edge from $(a, A)$ to $(b, B)$ if $b\in A$ and $A\times B \subseteq E^{\rel H}$.

We show that, for every graph $\rel H$, there is a homomorphism $\alpha\colon \Omega'(\rel H) \to \Omega(\rel H)$ defined by $\alpha((a,A)) = U$ where $U_{s_0} = \{ a \}$ and $U_{s_1} = \{ a \} \times A$.
To show that $\alpha$ preserves edges, assume that $(a, A)$ and $(b, B)$ are connected by an edge in $\Omega'(\rel H)$, i.e., $b\in A$ and $A\times B \subseteq E^{\rel H}$, and $U = \alpha((a,A))$, $V = \alpha((b, B))$. We have that
\begin{align*}
  \e_{t_1}(U, V) &= \{ (a, b) \} \\
  \e_{t_E}(U, V) &= A \times \{ (a, b) \} \times B,
\end{align*}
and claim that:
(1) $\e_{t_1}(U, V) \subseteq U_{s_1}$; which is true because $b\in A$.
(2) $\e_{t_E}(U, V) \subseteq \hom(\Gamma(\rel Q_E), \rel H)$; which is true, since the only edge of $\Gamma(\rel Q_E)$ is $(x_1, x_2)$ and the projection of $\e_{t_E}(U, V)$ to $x_1, x_2$ (the first and the last coordinate) is $A \times B \subseteq E^{\rel H}$.

A homomorphism $\beta\colon \Omega(\rel H) \to \Omega'(\rel H)$ is given by
$\beta(U) = (a, A)$ where $a$ is the unique element of $U_{s_0}$, and
\[
  A = \{ a' \mid (a, a') \in U_{s_1} \}.
\]
To show that it is a homomorphism, assume $(U, V) \in E^{\Omega(\rel H)}$, and let $\beta(U) = (a, A)$ and $\beta(V) = (b, B)$.
Since $\e_{t_1}(U, V) = U_{s_0} \times V_{s_0} = \{(a, b)\}$ and $\e_{t_1}(U, V) \subseteq U_{s_1}$, we have $(a, b) \in U_{s_1}$, which implies that $b \in A$. Also since $\e_{t_E}(U, V) \subseteq \hom(\Gamma(\rel Q_E), \rel H)$, $a\times A \subseteq U_{s_1}$, $b \times B \subseteq V_{s_1}$, and $t_E = \edge_E(s_1, s_1)$, we have that
\[
  A \times \{(a, b)\} \times B \subseteq \e_{t_E}(U, V) \subseteq \hom(\Gamma(\rel Q_E), \rel H),
\]
where the first inclusion follows from the definition of $\e_{t_E}(U, V)$, and the second includion follows from (A2).
In particular, the inclusion above implies that $A \times B \subseteq E^{\rel H}$. This concludes the proof of the homomorphic equivalence of $\Omega(\rel H)$ and $\Omega'(\rel H)$.

We note that our $\Omega(\rel H)$ can be reduced to a smaller homomorphically equivalent structure by requiring that vertices $U \in \Omega(\rel H)$ satisfy
\begin{enumerate}
    \item[(A3)] for all $s, s' \in \trees_V$ and homomorphisms $h\colon \relT s \to \rel T(s')$ with $h(r_s) = r_{s'}$, we have
    \[
      \{ f \circ h \mid f\in U_{s'} \} \subseteq U_s.
    \]
\end{enumerate}
Note that the elements constructed in the proof of Lemma~\ref{lem:A-easy} satisfy this property. In this particular example, this requirement would force that $U_{s_1} = a \times A$ for some $A\subseteq H$ and $a \in U_{s_0}$, since $s_0$ is embedded to $s_1$ as the root. This would then make the two homomorphisms defined above isomorphisms.

\subsection{Example: A 4-ary relation defined by an oriented path}

Our definition works also for Pultr templates that are not just digraph templates. As an example for comparison, let us consider a Pultr template that is similar to the previous example, but in this case maps digraphs to structures over a signature containing one 4-ary relational symbol~$R$.

Specifically, $\rel P$ is still a singleton with no edges, and $\rel Q_R$ is the same digraph as $\rel Q_E$ above, but we now have 4 homomorphisms $\epsilon_{i,R} \colon \rel P \to \rel Q_R$ for $i = 0,1,2,3$ which map the vertex of $\rel P$ to $0, 1, 2$, or $3$ respectively. Pictorially, the digraph $\rel Q_R$ together with its distinguished vertices $x_0$, \dots, $x_3$ is as follows.
\[
  \begin{tikzpicture}[vertex/.style = {circle, inner sep=1.3, outer sep=1},
      baseline = {([yshift=-.8ex]current bounding box.center)}]
    \node [draw, vertex, fill, label={above:{$x_0$}}] (v0) at (0,0) {};
    \node [draw, vertex, fill, label={above:{$x_1$}}] (v1) at (1,0) {};
    \node [draw, vertex, fill, label={above:{$x_2$}}] (v2) at (2,0) {};
    \node [draw, vertex, fill, label={above:{$x_3$}}] (v3) at (3,0) {};
    \draw [->] (v1) -- (v0);
    \draw [->] (v1) -- (v2);
    \draw [->] (v2) -- (v3);
  \end{tikzpicture}
\]
We let $t_R = t_E$ where $t_E$ is as above, and we use the same notation as in the previous example.

For a structure $\rel B$ with a 4-ary relation $R^{\rel B}$, vertices of $\Omega(\rel B)$ are defined in a similar way as above, i.e., they are pairs $(U_{s_0}, U_{s_1})$ where $U_{s_0} = \{b\}$ for some $b\in B$ and $U_{s_1} \subseteq B\times B$. Two such vertices $(U, V)$ are connected by an edge if
\begin{enumerate}
  \item $\e_{t_R}(U, V) \subseteq R^{\rel B}$, and
  \item $\e_{t_1}(U, V) \subseteq U_{s_1}$;
\end{enumerate}
where $t_R = \edge_E(s_1, s_1)$ and $t_1 = \edge_E(s_0, s_0)$.
More precisely, the first condition can be written as
\[
  \{ (u_1, u_0, v_0, v_1) \mid
    (u_0, u_1) \in U_{s_1}, (v_0, v_1) \in V_{s_1} \}
  \subseteq R^{\rel B},
\]
and the second condition can be written as $U_{s_0} \times V_{s_0} \subseteq U_{s_1}$.

The only real difference from the previous example is that $\e_{t_R}(U, V) \subseteq R^{\rel B}$ instead of requiring that the projection of $\e_{t_E}(U, V)$ on the first and the last coordinates is a subset of $E^{\rel H}$.
As above, we could create a homomorphically equivalent $\Omega'(\rel B)$ whose vertices would be pairs $(u, U)$ where $u\in B$ and $U \subseteq B$. Two such vertices $(u, U)$ and $(v, V)$ would be then connected by an edge if $v \in U$ and
\(
  U \times \{(u,v)\} \times V \subseteq R^{\rel B}
\).

\subsection{Duals from adjoints}
    \label{sec:adjoints-and-duals}

In \cite{FT15}, the authors claim that, for digraph Pultr templates with the edge relation defined by $\rel Q_E$, the image of the digraph with a single vertex and no edges under the right Pultr functor is a dual to $\rel Q_E$.
We may be slightly more precise when talking about our constructions, namely, we claim the following.

\begin{proposition}
  Let $\tau$ contain a single symbol $R$, and fix a $(\sigma, \tau)$-Pultr template defined by $\rel P = \rel V_1$ and $\rel Q_R = \relT{t_R}$ for some term $t_R$ such that $\rel Q_R \not\to \relT t$ for any $t < t_R$ (i.e., $t_R$ is a minimal term among those representing a structure homomorphically equivalent to $\rel Q_R$).
  Let $\Omega$ be the right adjoint to $\Gamma$ as defined in Definition~\ref{def:adjoint-1}.
  Then the image of the $\tau$-structure $\rel V_1$ under $\Omega$ is isomorphic to $\rel D(t_R)$.
\end{proposition}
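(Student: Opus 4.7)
The approach is to build an explicit bijection $\alpha$ between the vertex sets of $\Omega(\rel V_1)$ and $\rel D(t_R)$, then translate the edge conditions of $\Omega$ directly into those of $\rel D$. The key simplification is that $\rel V_1$ has only one vertex and empty $R$-relation, so $\hom(\Gamma(\relT t), \rel V_1)$ contains at most one map (the constant map), and it contains this map precisely when $\Gamma(\relT t)$ has no $R$-edges, i.e., when $\rel Q_R \not\to \relT t$. Since $t_R$ must be an $S$-term for some $\sigma$-symbol $S$ in order for $\rel D(t_R)$ to be defined as in Definition~\ref{def:duals}, every $V$-subterm $t \in \trees_V$ is a \emph{proper} subterm of $t_R$; by the minimality hypothesis on $t_R$, $\rel Q_R \not\to \relT t$, so each $U_t$ for $t \in \trees_V$ is a subset of a fixed singleton.

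I would then define $\alpha(U)_t = \true$ if $U_t$ is nonempty, and $\false$ otherwise. The requirement that $U_\vertex$ be a singleton translates exactly to $\alpha(U)_\vertex = \true$, matching the vertex description of $\rel D(t_R)$, so $\alpha$ is a bijection between the underlying sets. For the edges, observe that for $t = \edge_S(t_1, \dots, t_k) \in \trees_S$ and a tuple $(U^1,\dots,U^k)$, the set $\e_t(U^1,\dots,U^k)$ is nonempty iff each $U^i_{t_i}$ is nonempty, equivalently $\w_t(\alpha(U^1),\dots,\alpha(U^k)) = \true$; when nonempty, it is the singleton consisting of the constant map on $T(t)$. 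Under this correspondence, condition (A1) unfolds to the implication $\w_t \impl \alpha(U^i)_{\pr_i(t)}$, which is exactly (D1).

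For (A2), the constant map on $T(t)$ is a homomorphism $\Gamma(\relT t) \to \rel V_1$ iff $\rel Q_R \not\to \relT t$, so by minimality (A2) is vacuous for every proper subterm $t < t_R$; it reduces, only when $t = t_R$ itself lies in $\trees_S$ (which happens precisely when the $\sigma$-symbol being described is the root type of $t_R$), to the requirement $\w_{t_R}(\alpha(U^1),\dots,\alpha(U^k)) = \false$, exactly matching (D2). The main subtle point is this dichotomy in (A2): minimality trivialises it for proper subterms, while the identity homomorphism $\rel Q_R \to \relT{t_R}$ forces the dualising condition at the root, which is exactly the asymmetry (D2) captures and explains why the match here is on the nose rather than only up to homomorphic equivalence.
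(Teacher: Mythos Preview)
Your proposal is correct and follows essentially the same approach as the paper's proof: both identify that each $U_t$ is a subset of a one-element set (via the observation that there is a unique candidate map into $V_1$, which is a homomorphism precisely when $\rel Q_R \not\to \relT t$), set up the bijection by interpreting $\emptyset$ as $\false$ and the singleton as $\true$, and then match (A1) with (D1) and (A2) with (D2), using the minimality hypothesis exactly as you do to trivialise (A2) for proper subterms and recover the root condition $\w_{t_R} = \false$ at $t = t_R$. Your explicit remark that $t_R$ must be an $S$-term (so every $V$-subterm is proper) is a point the paper leaves implicit.
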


\begin{proof}
  Observe that there is only one function from any set $T$ to $V_1 = \{1\}$.
  Hence, there is only one candidate function $f$ for a homomorphism $\Gamma(\relT t) \to \rel V_1$ for any term $t$. It is not hard to observe that $f$ is a homomorphism if and only if $\Gamma(\relT t)$ has no edges. In particular, $f$ is a homomorphism for all $t < t_r$, since $\rel Q_R \not\to \relT t$ in that case.
  Consequently, each of the components of an element $U \in \Omega(V_1)$ is either the empty set or a singleton set.
  Furthermore, we have that $\hom(\Gamma(\relT{t_R}), \rel V_1) = \emptyset$, since $\Gamma(\relT{t_R})$ has an edge (witnessed by the identity homomorphism).

  The rest of the proof is based on the idea of treating $\emptyset$ as $\false$ and any singleton set as $\true$.
  With this interpretation, we get that $X \impl Y$ is equivalent to $X \subseteq Y$ and $X \meet Y$ is equivalent to $X \times Y$ whenever $X, Y$ are sets with at most one element. Note that this draws an immediate parallel between the definitions of $\w_t$ and $\e_t$, (D1) and (A1), and also (D2) and (A2) if we take into account that $\hom(\Gamma(\relT{t_R}), \rel V_1) = \emptyset$.

  We may now define an isomorphism $\Omega(\rel V_1) \simeq \rel D(t_R)$ by assigning to $u \in \rel D(t_R)$ an element $U \in \Omega(\rel V_1)$ where,  for each $t\in \trees_V$, $u_t = \false$ if $U_t = \emptyset$, and $u_t = \true$ if $U_t = \{ x\mapsto 1 \}$. The observations in the first paragraph of this proof show that this assignment is bijective. 
  
  To show that it preserves edges, observe that, for each $t\in \trees_R$, $\w_t(u^1, \dots, u^k) = \false$ if and only if $\e_t(U^1, \dots, U^k) = \emptyset$, since the former is defined as
  \(
    \w_t(u_1, \dots, u_k) = u^1_{t_1} \meet \dots \meet u^k_{t_k}
  \)
  and the latter satisfies
  \(
    \e_t(U_1, \dots, U_k) \simeq U^1_{t_1} \times \dots \times U^k_{t_k}
  \).
  Furthermore, (A1) is equivalent to (D1) since, for all $t\in \trees_R$ with $\pr_i(t) \in \trees_V$, $\w_t(u^1, \dots, u^k) \impl u^i_{\pr_i(t)}$ is equivalent to $\e_t(U^1, \dots, U^k) \subseteq U^i_{\pr_i(t)}$.
  Finally, (A2) $\w_{t_Q}(u^1, \dots, u^k) = \false$ is equivalent to
  \begin{equation}
    \text{for all $t\in \trees_R$, } \e_t(U^1, \dots, U^k) \subseteq \hom(\Gamma(\relT t), \rel B).
    \tag{D2}
  \end{equation}
  This is because $\hom(\Gamma(\relT t), \rel B)) = \emptyset$ if and only if $t = t_Q$, hence (D2) is trivially satisfied unless $t = t_Q$, in which case it says $\e_{t_Q}(U^1, \dots, U^k) = \emptyset$.
\end{proof}

\section{Adjoints to functors with domains defined by a relation}
  \label{sec:adjoint-edge}

In this section, we construct a right adjoint to the central Pultr functor defined by a $(\sigma, \tau)$-Pultr template for which $\rel P = \relcheck S_1$, for some choice of $\sigma$-symbol $\check S$, i.e., $\rel P$ is an $\check S$-edge.
Again, for each $k$-ary $\tau$-symbol $R$, we have a $\sigma$-tree $\rel Q_R$ with a $k$-tuple $\epsilon_{1,R}$, \dots, $\epsilon_{k,R}$ of homomorphisms $\rel P \to \rel Q_R$. Each of these homomorphisms selects an $\check S$-edge of $\rel Q_R$, we denote these edges by $x_1$, \dots, $x_k$, respectively, i.e., $x_i = \epsilon_{i,R}^{\check S}(e)$ for each $i = 1, \dots, k$ where $e \in \check S^{\rel P}$ is the unique $\check S$-edge of $\rel P$.
Naturally, the edges $x_1, \dots, x_{\ar R}$ depend on the symbol $R$ which will be always clear from the context.
Let us repeat the definition of $\Gamma(\rel A)$ for a $\sigma$-structure $\rel A$ using this notation:
The domain of $\Gamma(\rel A)$ is $\check S^{\rel A}$, and
for each $\tau$-symbol $R$ of arity $k$, the corresponding relation of $\Gamma(\rel A)$ is defined as
  \[
    R^{\Gamma(\rel A)} =
    \{ (h^{\check S}(x_1), \dots, h^{\check S}(x_k)) \mid h\colon \rel Q_R \to \rel A \}.
  \]
We also note that, for each homomorphism $h\colon \rel A \to \rel B$ between two $\sigma$-structures $\rel A$ and $\rel B$, $h^{\check S}\colon \Gamma(\rel A) \to \Gamma(\rel B)$ is a homomorphism.

The construction of $\Omega$ in this case is almost identical to the construction in Section~\ref{thm:adjoint-1} (Definitions~\ref{def:vertices-of-adjoint-1} and \ref{def:adjoint-1}), and we present it in a similar way.

We fix the following setting:
Fix a $(\sigma, \tau)$-Pultr template with $\rel P = \relcheck S_1$ for some $\sigma$-symbol $\check S$, and assume $\rel Q_R$ is a $\sigma$-tree for each $\tau$-symbol $R$.
For each $\tau$-symbol $R$, we pick a term $t_R$ representing $\rel Q_R$, and we let $\trees$ be the set of all subterms of any of the $t_R$'s. We use notation $\trees_V$ and $\trees_S$ for the $V$-terms and $S$-terms, where $S$ is a $\sigma$-symbol, that belong to this set.

\begin{definition}[Vertices of $\Omega(\rel B)$]
  \label{def:vertices-of-adjoint-edge}
  Let $\rel B$ be a $\tau$-structure. We define $\Omega(B)$ to be the set of all tuples
  \[
    U \in \prod_{t\in \trees_V} \subsets{\hom(\Gamma(\relT t), \rel B)}
  \]
  such that $U_{\vertex}$ is a singleton set, i.e., $U_{\vertex} = \{\emptyset\}$.
\end{definition}

As before, vertices of $\Omega(\rel B)$ are tuples $U$ indexed by $V$-terms in $\trees$, where the $t$-th entry is a set of homomorphisms from $\Gamma(\relT t)$ to $\rel B$ such that $U_\vertex$ contains exactly one homomorphism. A difference here is that this homomorphism is the map $\emptyset \to B$, since $\Gamma(\relT \vertex)$ has no vertices as $\rel P \not\to \relT \vertex$.
Thus $U_\vertex$ does not contain any information; it serves a similar purpose as $u_\vertex = \true$ in Definition~\ref{def:duals}.

To define edges, we use similar notation $\e_t$ as before.
Let $t_1, \dots, t_k \in \trees_V$, $S$ a relational symbol of arity $k$, and $U^1, \dots, U^k \in \Omega(B)$. Again, we consider the term $t = \edge_S(t_1, \dots, t_k)$ and the tree $\relT t$. Unlike in Section 5, the domain of $\Gamma(\relT t)$ is now $\check S^{\relT t}$ and not $T(t)$. This means we have to distinguish two cases:

\paragraph{Case 1: $S = \check S$}
  In this case, the domain of $\Gamma(\relT t)$ is
  \[
    \check S^{\relT t} =
      \check S^\relT{t_1} \cup \dots \cup \check S^\relT{t_k} \cup \{r_t\}.
  \]
  Hence, in order to define a mapping $f \colon \check S^{\relT t} \to B$, we need to specify its value on $r_t$ in addition to its restrictions to $S^{\relT{T_i}}$'s. In detail, a $k$-tuple of mappings $f_1\in U^1_{t_1}$, \dots, $f_k \in U^k_{t_k}$ together with an element $e_\bullet \in B$ uniquely defines a mapping $f \colon \check S^{\relT t} \to B$ by
  \[
    f = f_1 \cup \dots \cup f_k \cup \{ r_t \mapsto e_\bullet \}.
  \]
  Therefore, for each $e_\bullet \in B$, we denote by $\e_t(U^1, \dots, U_k; e_\bullet)$ the set of all such mappings. Again, we have that, for each $e_\bullet$, $\e_t(U^1, \dots, U_k; e_\bullet)$ is bijective to $U^1\times \dots \times U^k$.
  We will simply write $\e_t(E; e_\bullet)$ for $\e_t(U^1, \dots, U^k; e_\bullet)$ if $E = (U^1, \dots, U^k)$.

\paragraph{Case 2: $S \neq \check S$}
  In this case, we have $\check S^{\relT t} = \check S^\relT{t_1} \cup \dots \cup \check S^\relT{t_k}$. Thus the domain of $\Gamma(\relT t)$ is the disjoint union of domains of $\Gamma(\relT{t_i})$, and we define $\e_t(U^1, \dots, U^k)$ the same way as before, i.e., as the set of all unions $f_1 \cup \dots \cup f_k$ where $f_i \in U^i_{t_i}$ for each $i$.

If $S \neq \check S$, $S$-edges are defined analogously to Definition~\ref{def:adjoint-1}. To define $\check S$-edges, we have to take into account the new element $e_\bullet$.

\begin{definition}[Edges of $\Omega(\rel B)$] \label{def:adjoint-edge}
  Let $\rel B$ be a $\tau$-structure, we define a $\sigma$-structure $\Omega(\rel B)$ with universe $\Omega(B)$.

  We first define $\check S$-edges. Let $k$ be the arity of $\check S$.
  $\check S^{\Omega(\rel B)}$ consists of all tuples $(U^1, \dots, U^k) \in \Omega(B)^k$ for which there exists $e_\bullet \in B$ such that
  \begin{enumerate}
    \item[\normalfont (B1)] For all $t\in \trees_S$ and $i\in \{1,\dots, k\}$ such that $\pr_i(t) \in \trees_V$, we have
    \[
      \e_t(U^1, \dots, U^k; e_\bullet) \subseteq U^i_{\pr_i(t)}.
    \]
    \item[\normalfont (B2)] For all $t\in \trees_S$,
    \[
      \e_t(U^1, \dots, U^k; e_\bullet) \subseteq \hom(\Gamma(\relT t), \rel B).
    \]
  \end{enumerate}

  If $S \neq \check S$, the relation $S^{\Omega(\rel B)}$ is defined in the same way as in Definition~\ref{def:adjoint-1} except we use the meaning of $\e_t$ defined in this section.
\end{definition}

Given an edge $E \in \check S^{\Omega(\rel B)}$, we call the element $e_\bullet$, which satisfies the conditions (B1) and (B2), a \emph{witness} of this edge. Note that this witness is the only significant difference between Definitions~\ref{def:adjoint-1} and \ref{def:adjoint-edge}.

We claim that this definition indeed constructs a right adjoint to $\Gamma$.

\begin{theorem} \label{thm:adjoint-edge}
  Assume a $(\sigma, \tau)$-Pultr template with $\rel P$ being the $\sigma$-tree with $\ar \check S$ vertices connected by an $\check S$-edge for some $\sigma$-symbol $\check S$, and $\rel Q_R$ being a $\sigma$-tree for all $\tau$-symbols $R$.
  Further, assume $\Gamma$ is the central Pultr functor defined by this template, and $\Omega$ is defined as in Definition~\ref{def:adjoint-edge}.

  For every $\sigma$-structure $\rel A$ and $\tau$-structure $\rel B$, there is a homomorphism $\Gamma(\rel A) \to \rel B$ if and only if there is a homomorphism $\rel A \to \Omega(\rel B)$.
\end{theorem}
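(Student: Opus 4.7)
The plan is to adapt the three-lemma structure used in the proof of Theorem~\ref{thm:adjoint-1}, tracking the new witness element $e_\bullet$ appearing in Definition~\ref{def:adjoint-edge}.

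For the easy direction, given $f\colon \Gamma(\rel A)\to \rel B$, I would define $g\colon A \to \Omega(B)$ by
\[
  g(u)_t = \{f \circ h^{\check S} \mid h\colon \relT t \to \rel A,\ h(r_t) = u\}
\]
for each $V$-term $t\in \trees_V$ and $u\in A$. The coordinate $g(u)_\vertex$ is automatically $\{\emptyset\}$ since $\check S^{\relT\vertex} = \emptyset$, and preservation of $S$-edges for $S\neq \check S$ follows by the argument of Lemma~\ref{lem:A-easy}. For an $\check S$-edge $e = (u_1,\dots,u_k)\in \check S^{\rel A}$ of $\rel A$, the natural witness is $e_\bullet := f(e)$, and a short direct computation yields
\[
  \e_t(g^{\check S}(e); f(e)) = \{f \circ h^{\check S} \mid h\colon \relT t \to \rel A,\ h^{\check S}(r_t) = e\}
\]
for every $t\in \trees_{\check S}$, from which conditions (B1) and (B2) follow exactly as in Lemma~\ref{lem:A-easy}.

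The converse direction has one new subtlety: the witness of an $\check S$-edge of $\Omega(\rel B)$ is not canonical. I would circumvent this by fixing once and for all a choice function $\omega\colon \check S^{\Omega(\rel B)}\to B$ sending each $\check S$-edge to one of its witnesses. For any homomorphism $h\colon \relT t \to \Omega(\rel B)$, define $d_h\colon \check S^{\relT t}\to B$ by $d_h(e) = \omega(h^{\check S}(e))$. The analogue of Lemma~\ref{lem:A-hard-pre}, to be proved by induction on $t$, is that $d_h \in h(r_t)_t$ when $t$ is a $V$-term, $d_h \in \e_t(h^S(r_t))$ when $t$ is an $S$-term with $S\neq \check S$, and $d_h \in \e_t(h^{\check S}(r_t); \omega(h^{\check S}(r_t)))$ when $t$ is an $\check S$-term. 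Each such membership, combined with the definition of $\Omega(B)$, with (A2), or with (B2) respectively, yields that $d_h\colon \Gamma(\relT t)\to \rel B$ is a homomorphism. The induction mirrors Lemma~\ref{lem:A-hard-pre}; the only genuinely new case is $t = \edge_{\check S}(t_1,\dots,t_k)$, where the definition of $d_h$ already assigns to $r_t$ the value $\omega(h^{\check S}(r_t))$, i.e., precisely the witness required by $\e_t(\cdot;\cdot)$.

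The hard direction then proceeds as in Lemma~\ref{lem:A-hard}. From $g\colon \rel A \to \Omega(\rel B)$, define $f\colon \check S^{\rel A}\to B$ by $f(u) = \omega(g^{\check S}(u))$. Given $(u_1,\dots,u_k) = (h^{\check S}(x_1),\dots,h^{\check S}(x_k)) \in R^{\Gamma(\rel A)}$ arising from some $h\colon \rel Q_R \to \rel A$, the tree lemma applied to $g\circ h\colon \relT{t_R}\to \Omega(\rel B)$ produces a homomorphism $d := d_{g\circ h}\colon \Gamma(\relT{t_R})\to \rel B$ satisfying $d(x_i) = \omega(g^{\check S}(u_i)) = f(u_i)$; since $(x_1,\dots,x_k)\in R^{\Gamma(\relT{t_R})}$ is witnessed by the identity homomorphism $\relT{t_R}\to \relT{t_R}$, we obtain $(f(u_1),\dots,f(u_k))\in R^{\rel B}$. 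The main obstacle is precisely this witness bookkeeping: in Section~\ref{sec:adjoint-1} the unique element of $g(u)_\vertex$ implicitly played the role of a witness, but here a global choice $\omega$ is needed so that the witness defining $f$ agrees with the witnesses implicit in the tree-lemma construction, which is what forces $d(x_i) = f(u_i)$ in the last step.
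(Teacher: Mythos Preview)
Your proposal is correct and follows essentially the same three-lemma structure as the paper's proof: the easy direction, the tree lemma, and the hard direction are all set up and argued in the same way, with the witness $e_\bullet = f(e)$ in the forward direction and a chosen witness for each $\check S$-edge in the backward direction. The only cosmetic difference is that you fix the witness choice $\omega$ globally on $\check S^{\Omega(\rel B)}$, whereas the paper states the tree lemma for an arbitrary witness-picking map $d$ and then chooses witnesses at the level of $\check S^{\rel A}$, applying the lemma with $d = f\circ h^{\check S}$; since your $d_{g\circ h}$ equals $f\circ h^{\check S}$ anyway, the two presentations coincide.
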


The proof is analogous to the proof of Theorem~\ref{thm:adjoint-1} with the following changes: we use $h^{\check S}$ in place of $h$ whenever $h\colon \rel A \to \rel B$ was used as a homomorphism $\Gamma(\rel A) \to \Gamma(\rel B)$. Furthermore, we use a witness $e_\bullet$ in place of the unique value of the homomorphism $f\in U_\vertex$ throughout the proof. In particular, if $f\colon \rel A \to \Omega(\rel B)$, we define $g\colon \Gamma(\rel A) \to \rel B$ by letting $g(e)$ be the witness of the edge $f^{\check S}(e)$. With these substitutions all the arguments of the previous section apply in the case of Theorem~\ref{thm:adjoint-edge}. For completeness and reference, we include the proof in full detail in the last subsection of this section.

\subsection{Example: The arc graph construction}
    \label{sec:arc-graph}

Recall the arc-graph construction from Example~\ref{ex:arc-graph}, which can be expressed as the central Pultr functor whose template consists of structures $\rel P = (\{0,1\}; \{(0,1)\})$ and $\rel Q_E = (\{0,1,2\}; \{(0,1), (1,2)\})$ with $\epsilon_1(0) = 0$ and $\epsilon_1(1) = 1$, and $\epsilon_2(0) = 1$ and $\epsilon_2(1) = 2$.

The right adjoint $\Omega$ according to our definition above would be constructed in the following way: First, we choose a term $t_E$ representing $\rel Q_E$. We can pick $t_2$ as in Section~\ref{ex:path-dual}.
\begin{gather*}
  \begin{tikzpicture}[every node/.style = {circle, inner sep=1.3, outer sep=1},
      baseline = {([yshift=-.8ex]current bounding box.center)}]
    \node [draw] (b1) at (1,0) {};
    \node [draw] (b2) at (2,0) {};
    \node [draw] (b3) at (3,0) {};
    \draw [->] (b1) -- (b2);
    \draw [->, very thick, red] (b2) -- (b3);
  \end{tikzpicture}
  \\
  t_2 = \edge_E(\pr_2(\edge_E(\vertex, \vertex)), \vertex)
\end{gather*}
We also name all its subterms as in Section~\ref{ex:path-dual}, i.e.,
\begin{align*}
  s_0 &= \vertex      & t_1 &= \edge_E(s_0, s_0) \\
  s_1 &= \pr_2 (t_1)
\end{align*}

The vertices of $\Omega(\rel B)$ are defined as pairs $(U_{s_0}, U_{s_1})$ such that $U_{s_0} = \{\emptyset\}$ and $U_{s_1} \subseteq \hom(\Gamma(\relT{s_1}), \rel B)$.
Two such vertices $U, V$ are connected by an edge if there is $e_\bullet\in B$ such that
\begin{enumerate}
  \item $\e_{t_1}(U, V; e_\bullet) \subseteq V_{s_1}$ --- this is condition (B1) for $t = t_1$;
  \item $\e_{t_i}(U, V; e_\bullet) \subseteq \hom(\Gamma(\rel T(t_i)), \rel B)$ for $i = 1,2$ --- this is condition (B2).
\end{enumerate}

These conditions can be considerably simplified.
First, since $U_{s_0} = V_{s_0} = \{ \emptyset \}$, we have
\begin{align*}
  \e_{t_1}(U, V; e_\bullet) &= \{ r_{t_1} \mapsto e_\bullet \} \\
  \e_{t_2}(U, V; e_\bullet) &= \{ f \union (r_{t_2} \mapsto e_\bullet) \mid f\in U_{s_1} \}.
\end{align*}
Further, since $\Gamma(\rel T(s_1)) = \Gamma(\rel T(t_1))$ is the graph with a single vertex and no edges, we can identify $U_{s_1}$ and $\e_{t_1}(U, V; e_\bullet)$ with subsets of $B$. Connecting this observation with the comment about $U_{s_0}$, we can identify $U$ with its coordinate $U_{s_1} \subseteq B$.
Finally, if $e_\bullet$ is a witness, the elements of $\e_{t_2}(U, V; e_\bullet)$ are homomorphisms from a directed edge to $\rel B$ which correspond to the edges of $\rel B$. Using this correspondence, we can identify $\e_{t_2}(U, V; e_\bullet)$ with a subset of $E^{\rel B}$. Taking all of these into account the conditions above simplify to
\begin{enumerate}
  \item $\e_{t_1}(U, V; e_\bullet) = \{ e_\bullet \} \subseteq V$, and
  \item $\e_{t_2}(U, V; e_\bullet) = U \times \{ e_\bullet \} \subseteq E^{\rel B}$.
\end{enumerate}
So, we can say that a vertex of $\Omega(\rel B)$ is a subset $U$ of $B$, and $(U, V)$ is an edge of $\Omega(\rel B)$ if
\[
  \exists e_\bullet \in V
  \text{ such that }
  U\times \{ e_\bullet \} \subseteq E^{\rel B}.
\]

We compare this construction with the functor $\delta_R$ described in \cite[Definition 3.1]{FT15} as a right adjoint to $\delta$. For a digraph $\rel B$, the vertices of the digraph $\delta_R(\rel B)$ are the complete bipartite subgraphs of $\rel B$, i.e., pairs $(U^-,U^+)$ of subsets of vertices of $\rel B$ such that $U^- \times U^+ \subseteq E^{\rel B}$. There is an edge from $(U^-, U^+)$ to $(V^-, V^+)$ if $U^+ \cap V^- \neq \emptyset$. Below, we show that $\delta_R(\rel B)$ and $\Omega(\rel B)$ are homomorphically equivalent.

We start by constructing a homomorphism $h\colon \delta_R(\rel B) \to \Omega(\rel B)$. We let
\[
  h(U^-, U^+) = U^-.
\]
To show that it preserves edges, assume $U^+\cap V^- \neq \emptyset$, i.e., there exists $e_\bullet \in U^+ \cap V^-$. We claim that this $e_\bullet$ witnesses that $U^-$ and $V^-$ is an edge in $\Omega(\rel B)$. Clearly, $e_\bullet \in V^-$. Also, we have
\[
  U^- \times \{e_\bullet\} \subseteq U^- \times U^+ \subseteq E^{\rel B}.
\]

A homomorphism $g\colon\Omega(\rel B) \to \delta_R(\rel B)$ is a bit harder to construct. Guided by the above, it is natural to choose the first component of $g(U)$ to be $g(U)^- = U$. We need to define the second component $g(U)^+$.
We let $g(U)^+$ be the largest set such that $g(U)^- \times g(U)^+ \subseteq E^{\rel B}$, i.e.,
\[
  g(U)^+ = \{ v \in B \mid \forall u\in U, (u, v)\in E^{\rel B} \}.
\]
Now, assume that $U$ and $V$ are connected by an edge in $\Omega(\rel B)$ witnessed by $e_\bullet$. We claim that $e_\bullet \in g(U)^+ \cap g(V)^-$. By definition of $\Omega(\rel B)$, we have $e_\bullet \in V = g(V)^-$, and $U \times \{e_\bullet\} \subseteq E^{\rel B}$, which implies that $e_\bullet \in g(U)^+$. Altogether, $e_\bullet \in g(U)^+ \cap g(V)^-$, and hence $(g(U),g(V)) \in E^{\delta_R(\rel B)}$. This completes the proof.

This example shows how $U_{\vertex}$ can be eliminated from the definition of $\Omega(\rel B)$. Let us repeat again, that the only purpose of $U_{\vertex}$ is to avoid case distinction between some $E$-terms, e.g., between $\edge_E(t, s)$ and $\edge_E(t, \vertex)$ for $s\neq \vertex$. We may ignore it in this example, since we expanded every single case when it is used.

\subsection{Example: Arc structure}
  \label{sec:arc-structure}

In this subsection, we consider a certain variant of the arc graph construction, which we will call an \emph{arc structure} and which encodes more information than the arc graph: The domain of the arc structure coincides with the domain of the arc graph, i.e., the set of all edges of the input graph, and we extend the signature with two more binary symbols that will relate those pairs of edges that are incident in a different sense.
The goal of this example is two-fold: first, to show how the construction of right adjoints works in a more general signature, and second, to show how the right adjoint changes if we change the central Pultr functor in such a way that it encodes more information about the input structure. Note, for example, that any tree can be recovered from its image under the arc structure construction, but there are trees (e.g., the one presented in Fig.~\ref{fig:arc-structure}) that cannot be recovered from their arc graph.

We fix $\phi$ to be a signature with three binary relations $D$, $I$, and $O$, and we let $\gamma$ be the signature of digraphs. We define a central Pultr functor $\bd$ using the $(\gamma, \phi)$-Pultr template defined as follows:
The digraph defining vertices is the digraph with a single directed edge, i.e., $\rel P = \rel E_1$, and the digraphs $\rel Q_D$, $\rel Q_I$, $\rel Q_O$ are the following where the images of $\epsilon_i$'s are highlighted and labelled by $x_i$.
\[
  \begin{matrix}
    \begin{tikzpicture}[every node/.style = {circle, inner sep=1.3, outer sep=1}]
        baseline = {([yshift=-.8ex]current bounding box.center)}]
      \node [draw] (b1) at (1,0) {};
      \node [draw] (b2) at (2,0) {};
      \node [draw] (b3) at (3,0) {};
      \draw [->,thick] (b1) -- (b2) node [midway, label={above:$x_1$}] {};
      \draw [->,thick] (b2) -- (b3) node [midway, label={above:$x_2$}] {};
    \end{tikzpicture}
    &
    \begin{tikzpicture}[every node/.style = {circle, inner sep=1.3, outer sep=1}]
      \node [draw] (b1) at (1,0) {};
      \node [draw] (b2) at (2,0) {};
      \node [draw] (b3) at (3,0) {};
      \draw [->,thick] (b1) -- (b2) node [midway, label={above:$x_1$}] {};
      \draw [<-,thick] (b2) -- (b3) node [midway, label={above:$x_2$}] {};
    \end{tikzpicture}
    &
    \begin{tikzpicture}[every node/.style = {circle, inner sep=1.3, outer sep=1}]
      \node [draw] (b1) at (1,0) {};
      \node [draw] (b2) at (2,0) {};
      \node [draw] (b3) at (3,0) {};
      \draw [<-,thick] (b1) -- (b2) node [midway, label={above:$x_1$}] {};
      \draw [->,thick] (b2) -- (b3) node [midway, label={above:$x_2$}] {};
    \end{tikzpicture} \\
    \rel Q_D & \rel Q_I & \rel Q_O
  \end{matrix}
\]
Note that the digraph $\rel Q_D$ with the two distinguished edges defines the arc-graph functor. This means that for each digraph $\rel G$, the reduct $(A, D^{\rel A})$ where $\rel A = \bd(\rel G)$ is the arc-graph of $\rel G$ (if $D$ is interpreted as $E$), see Figure~\ref{fig:arc-structure}.
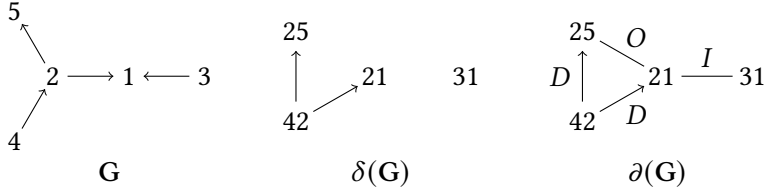
\begin{figure}
  \[\begin{matrix}
    \begin{tikzpicture}[every node/.style = {circle, inner sep=1, outer sep=0},
        baseline = {([yshift=-.8ex]current bounding box.center)}]
      \node (1) at (1,0) {1};
      \node (2) at (0,0) {2};
      \node (3) at (2,0) {3};
      \node (4) at (-.5,-.866) {4};
      \node (5) at (-.5,.866) {5};
      \draw [->] (2) -- (1);
      \draw [->] (3) -- (1);
      \draw [->] (4) -- (2);
      \draw [->] (2) -- (5);
    \end{tikzpicture} &\quad
    \begin{tikzpicture}[every node/.style = {circle, inner sep=1, outer sep=0},
        baseline = {([yshift=-.8ex]current bounding box.center)}, scale = 1.2]
      \node (42) at (-.866,-.5) {42};
      \node (25) at (-.866,.5) {25};
      \node (21) at (0,0) {21};
      \node (31) at (1,0) {31};
      \draw [->] (42) -- (25);
      \draw [->] (42) -- (21);
    \end{tikzpicture} \quad&
    \begin{tikzpicture}[every node/.style = {circle, inner sep=1, outer sep=0},
        baseline = {([yshift=-.8ex]current bounding box.center)}, scale = 1.2]
      \node (42) at (-.866,-.5) {42};
      \node (25) at (-.866,.5) {25};
      \node (21) at (0,0) {21};
      \node (31) at (1,0) {31};
      \draw [->] (42) -- (25) node [midway, label={left:$D$}] {};
      \draw [->] (42) -- (21) node [midway, label={-60:$D$}] {};
      \draw (25) -- (21) node [midway, label={60:$O$}] {};
      \draw (31) -- (21) node [midway, label={above:$I$}] {};
    \end{tikzpicture}
    \\
    \rel G & \delta(\rel G) & \bd(\rel G)
  \end{matrix}\]
    \caption{A digraph, its arc graph and its arc structure. The $O$ and $I$ relations of $\bd(\rel G)$ are symmetric, and $O$ and $I$ loops on all vertices of $\bd(\rel G)$ are omitted for readability.}
    \label{fig:arc-structure}
\end{figure}

Now, to construct the right adjoint to $\bd$, which we denote by $\omega_\bd$, we fix the following terms representing the graphs $\rel Q_D$, $\rel Q_I$, and $\rel Q_O$.
\[
  \begin{matrix}
    \begin{tikzpicture}[every node/.style = {circle, inner sep=1.3, outer sep=1},
        baseline = {([yshift=-.8ex]current bounding box.center)}]
      \node [draw] (b1) at (1,0) {};
      \node [draw] (b2) at (2,0) {};
      \node [draw] (b3) at (3,0) {};
      \draw [->] (b1) -- (b2) {};
      \draw [->,thick,red] (b2) -- (b3) {};
    \end{tikzpicture}
    &
    \begin{tikzpicture}[every node/.style = {circle, inner sep=1.3, outer sep=1},
        baseline = {([yshift=-.8ex]current bounding box.center)}]
      \node [draw] (b1) at (1,0) {};
      \node [draw] (b2) at (2,0) {};
      \node [draw] (b3) at (3,0) {};
      \draw [->] (b1) -- (b2) {};
      \draw [<-,thick,red] (b2) -- (b3) {};
    \end{tikzpicture}
    &
    \begin{tikzpicture}[every node/.style = {circle, inner sep=1.3, outer sep=1},
        baseline = {([yshift=-.8ex]current bounding box.center)}]
      \node [draw] (b1) at (1,0) {};
      \node [draw] (b2) at (2,0) {};
      \node [draw] (b3) at (3,0) {};
      \draw [<-] (b1) -- (b2) {};
      \draw [->,thick,red] (b2) -- (b3) {};
    \end{tikzpicture} \\
    t_D = \edge_E(s_2, \vertex) &
    t_I = \edge_E(\vertex, s_2) &
    t_O = \edge_E(s_1, \vertex)
  \end{matrix}
\]
where $s_i = \pr_i(\edge_E(\vertex, \vertex))$ for $i=1,2$. We name all remaining subterms as follows, $s_0 = \vertex$, $t_E = \edge_E(\vertex, \vertex)$. The set of terms defining $\omega_\bd$ is $\trees = \{ s_0, s_1, s_2, t_E, t_D, t_I, t_O \}$. Of which, the $V$-terms are $\trees_V = \{ s_0, s_1, s_2 \}$. Which means that the vertices of $\omega_\bd(\rel B)$ are triples $(U_{s_0}, U_{s_1}, U_{s_2})$ where $U_{s_0} = \{\emptyset\}$, and $U_{s_1}, U_{s_2}$ are sets of functions from a 1-element set to $B$. We identify such a triple with a pair $(U^+, U^-)$ where $U^+\subseteq B$ is the set of images of function in $U_{s_1}$ and $U^-\subseteq B$ the set of images of functions in $U_{s_2}$.
Using a similar simplification of the definition of edges as in the previous example, we get that two such pairs $(U^+, U^-)$ and $(V^+, V^-)$ are connected by an edge if there exists $e_\bullet \in B$, so that the sets
\begin{align*}
  \e_{t_E}(U, V; e_\bullet) &= \{ e_\bullet \} &
  \e_{t_D}(U, V; e_\bullet) &= U^- \times \{ e_\bullet \} \\
  \e_{t_I}(U, V; e_\bullet) &= \{ e_\bullet \} \times V^- &
  \e_{t_O}(U, V; e_\bullet) &= U^+ \times \{ e_\bullet \}
\end{align*}
satisfy (B1) $\e_{t_E}(U, V; e_\bullet) \subseteq U^+ \cap V^-$ and (B2) $\e_{t_R}(U, V; e_\bullet) \subseteq R^{\rel B}$ for each $R \in \{D, I, O\}$.
This means that $(U^+, U^-)$ and $(V^+, V^-)$ are connected by an edge if there exists $e_\bullet \in U^+ \cap V^-$ such that
\[
  U^- \times \{ e_\bullet \} \subseteq D^{\rel B},
  \{ e_\bullet \} \times V^- \subseteq I^{\rel B},
  U^+ \times \{ e_\bullet \} \subseteq O^{\rel B}.
\]
This completes the definition, although we can further refine it by requiring that for each vertex, the sets $U^+$ and $U^-$ satisfy additional properties that would automatically imply the conditions above. Namely, we require
\[
  U^- \times U^+ \subseteq D^{\rel B},
  U^- \times U^- \subseteq I^{\rel B},
  U^+ \times U^+ \subseteq O^{\rel B}.
\]
To sum up the refined definition, we let $\omega_\bd(\rel B)$ be the digraph with vertex set
\[
  \{ (U^+, U^-) \mid
  U^- \times U^+ \subseteq D^{\rel B},
  U^- \times U^- \subseteq I^{\rel B},
  U^+ \times U^+ \subseteq O^{\rel B} \}
\]
where $(U^+, U^-)$ and $(V^+, V^-)$ form an edge if $U^+ \cap V^- \neq \emptyset$. It is not hard to check that even after the refinements, we still get the right adjoint to $\bd$, i.e., that indeed there is a homomorphism $\rel A \to \omega_\bd(\rel B)$ if and only if there is a homomorphism $\bd(\rel A) \to \rel B$ for any digraph $\rel A$ and $\phi$-structure $\rel B$. Note how this compares to the right adjoint $\delta_R$ of the arc digraph functor as defined in the previous subsection.

\subsection{Proof of Theorem \ref{thm:adjoint-edge}}

As noted above, the proof closely follows the proof of Theorem~\ref{thm:adjoint-1}, and we present it with the same structure starting with the easier of the two implications.

The following lemma is an analogue to Lemma~\ref{lem:A-easy}. The statement is identical, although the meaning of $\Gamma$ and $\Omega$ has changed. In the proof, $h$ is no longer a homomorphism from $\Gamma(\relT t) \to \Gamma(\rel A)$, hence we replace every instance of $h$ being used as such a homomorphism with $h^{\check S}$.  Furthermore, to show that $g$ preserves an edge $e \in {\check S}^\rel A$, we need to provide a witness, which is $w_\bullet = f(e)$.

\begin{lemma} \label{lem:general-easy}
  If there is a homomorphism $f\colon \Gamma(\rel A) \to \rel B$, then there is a homomorphism $g\colon \rel A \to \Omega(\rel B)$.
\end{lemma}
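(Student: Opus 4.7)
The plan is to mirror the proof of Lemma~\ref{lem:A-easy} with two modifications dictated by the edge-based setting: whenever a homomorphism $h\colon \relT t \to \rel A$ was previously treated as a homomorphism $\Gamma(\relT t) \to \Gamma(\rel A)$, I would now use its coordinate-wise action $h^{\check S}\colon \check S^{\relT t} \to \check S^{\rel A}$; and, when checking that an $\check S$-edge of $\rel A$ is preserved, I would supply $e_\bullet = f(e)$ as the witness required by Definition~\ref{def:adjoint-edge}.

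Concretely, I would define $g\colon A \to \Omega(B)$ by
\[
  g(u)_t = \{ f\circ h^{\check S} \mid h\colon \relT t \to \rel A,\ h(r_t) = u \}
\]
for every $u\in A$ and every $t\in \trees_V$. The first routine check is that $g(u)\in \Omega(B)$: since $\rel P \not\to \relT \vertex$, the structure $\Gamma(\relT \vertex)$ has empty domain, so $h^{\check S}$ is the empty map for the unique $h$ with $h(r_\vertex) = u$, and $g(u)_\vertex = \{\emptyset\}$ as required; and each $f\circ h^{\check S}$ is a homomorphism $\Gamma(\relT t) \to \rel B$ as the composition $\Gamma(\relT t) \to \Gamma(\rel A) \to \rel B$.

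The main verification is that $g$ preserves each $\sigma$-relation. Given $e = (u_1,\dots, u_k) \in S^{\rel A}$, I would first establish, for every $t = \edge_S(t_1,\dots, t_k) \in \trees_S$, the identity
\[
  \e_t(g^S(e);\, e_\bullet) = \{ f\circ h^{\check S} \mid h\colon \relT t \to \rel A,\ h^S(r_t) = e \},
\]
with $e_\bullet = f(e)$ in case $S = \check S$, and with the ``$;\, e_\bullet$'' omitted when $S \neq \check S$. Each inclusion is obtained exactly as in Lemma~\ref{lem:A-easy}, by restricting $h$ to the subtrees $\relT{t_i}$ or by gluing a matching tuple of restrictions back together. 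The only novelty is that, when $S = \check S$, the set $\check S^{\relT t}$ contains the additional element $r_t$, on which $f\circ h^{\check S}$ evaluates to $f(h^{\check S}(r_t)) = f(e) = e_\bullet$, matching precisely the contribution of the witness in the definition of $\e_t(-;e_\bullet)$. Once this identity is in place, (A1)/(B1) follow since any $h$ as above sends $r_{\pr_i(t)}$ to $u_i$, so its restriction lies in $g(u_i)_{\pr_i(t)}$; and (A2)/(B2) are immediate since every element of $\e_t$ is of the form $f \circ h^{\check S}$.

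The only genuine novelty relative to Lemma~\ref{lem:A-easy} is the presence of the witness and the bookkeeping of the new root edge $r_t$; I expect this to be the main obstacle, but it is resolved by the natural choice $e_\bullet = f(e)$, which by construction is exactly the value that any candidate $f\circ h^{\check S}$ assigns to $r_t$.
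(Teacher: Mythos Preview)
Your proposal is correct and is essentially identical to the paper's own proof: you define $g(u)_t$ the same way, choose the same witness $e_\bullet = f(e)$ for $\check S$-edges, and establish the same key identity for $\e_t$ from which (A1)/(B1) and (A2)/(B2) follow. The only difference is presentational --- the paper separates the cases $S\neq \check S$ and $S=\check S$ more explicitly --- but the argument is the same.
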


\begin{proof}
  Assuming $f\colon \Gamma(\rel A) \to \rel B$, we define a mapping $g\colon A \to \Omega(B)$ by setting, for all $u\in A$ and all $t\in \trees_V$,
  \[
    g(u)_t = \{ f\circ h^{\check S} \mid h\colon \relT t \to \rel A, h(r_t) = u \},
  \]
  and claim that it is a homomorphism from $\rel A$ to $\Omega(\rel B)$.
  We need to check that $g(u)$ is well-defined, i.e., that $f\circ h^{\check S}$ is a homomorphism from $\Gamma(\relT t)$ to $\rel B$ for all $t\in \trees_V$ and that $g(u)_{\vertex} = \{\emptyset\}$. Indeed, $f\circ h^{\check S}$ is a homomorphism since it is a composition of homomorphisms $h^{\check S}\colon \Gamma(\relT t) \to \Gamma(\rel A)$ and $f\colon \Gamma(\rel A) \to \rel B$. And, $g(u)_{\vertex} = \{\emptyset\}$ because $\relT{\vertex}$ has no ${\check S}$-edges, which implies that the unique mapping $h\colon T(\vertex) \to B$ such that $h(r_\vertex) = u$ is a homomorphism, and, moreover, this $h$ satisfies $h^{\check S} = \emptyset$. Consequently, $f\circ h^{\check S} = \emptyset$, as we wanted to show.

  To show that $g$ is a homomorphism, assume first that $S\neq {\check S}$ is a $\sigma$-symbol of arity $k$, and $e = (u_1, \dots, u_k) \in S^{\rel A}$. We claim that, for each $t\in \trees_S$,
  \[
    \e_t(g(e)) = \{ f\circ h^{\check S}
      \mid h\colon \relT{t} \to \rel{A}, h^S(r_t) = e \}.
  \]
  Let $t = \edge_S(t_1, \dots, t_k)$, and observe that homomorphisms $h\colon \relT t \to \rel A$ such that $h^S(r_t) = e$ are in 1-to-1 correspondence with $k$-tuples of homomorphisms $h_1, \dots, h_k$, such that $h_i \colon \relT{t_i} \to \rel A$ and $h_i(r_{t_i}) = u_i$ for all $i\in [k]$, obtained as their restrictions to the respective subtrees. If $h$ is the union of $h_i$'s then also $h^{\check S}$ is the union of $h_i^{\check S}$'s. The claim then easily follows.
  For (A1), we want to check that
  \(
    \e_t(g(e)) \subseteq g(u_i)_s
  \)
  where $s = \pr_i(t)$. Observe that if $h\colon \relT t \to \rel A$ and $h^S(r_t) = e$, then $h(r_s) = u_i$, since the $i$-th component of $r_t$ is $r_s$. The condition then follows from the claim.
  Finally, (A2) follows directly from the claim by the same argument as $g(u)_t \subseteq \hom(\relT t, \rel B)$.

  Second, for the case $S = {\check S}$ and $e = (u_1, \dots, u_k) \in \check S^{\rel A}$, we need to pick an element $w_\bullet \in B$ witnessing that $g^{\check S}(e) \in \check S^{\Omega(\rel B)}$ --- we pick $w_\bullet = f(e)$. We claim that, for each $t\in \trees_{\check S}$,
  \[
    \e_t(g^{\check S}(e); f(e)) = \{ f \circ h^{\check S} \mid h\colon \relT t \to \rel A, h^{\check S}(r_t) = e \}.
  \]
  The inclusion `$\supseteq$' is clear, since the restriction of $h$ to $T(t_i)$ is a homomorphism $h_i\colon \relT{t_i}\to \rel A$ such that $h_i(r_{t_i}) = u_i$. The other inclusion also follows, since, given homomorphisms $h_i\colon \relT{t_i}\to \rel A$ with $h_i(r_{t_i}) = u_i$, their union is a homomorphism $h\colon \relT{t} \to \rel A$ with $h^{\check S}(r_t) = e$. Moreover,
  \[
    h^{\check S} = h_1^{\check S} \cup \dots \cup h_k^{\check S} \cup (r_t \mapsto e),
  \]
  and hence
  \[
    f\circ h^{\check S} = (f\circ h_1^{\check S}) \cup \dots \cup (f\circ h_k^{\check S}) \cup (r_t \mapsto f(e)).
  \]
  The conditions (B1) and (B2) then follow from the claim similarly as above.
\end{proof}

The other implication is proved in the following two lemmas.
As in the case of Lemma~\ref{lem:A-hard-pre}, the first lemma provides the adjunction in the special case $\rel A = \relT t$ for $t\in \trees$, and we derive the general case, which is covered by the second lemma, using the special case.

The statement of Lemma~\ref{lem:A-hard-pre} is changed to account for differences in the definitions. The mapping $d$ is now defined to map $s$ to a witness of the edge $h^{\check S}(s)$ instead of the unique image of the element of $h(v)$. The proof is analogous, by induction on the term $t$, although now we have to distinguish four cases.

\begin{lemma}
  Let $t\in \trees$, $h\colon \relT t \to \Omega(\rel B)$, and $d\colon \check S^{\relT t} \to B$ be a map such that, for all $s\in \check S^{\relT t}$, $d(s)$ is a witness for the edge $h^{\check S}(s)$.
  Then $d$ is a homomorphism $\Gamma(\relT t) \to \rel B$.
\end{lemma}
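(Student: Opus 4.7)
The plan is to prove the lemma by induction on $t$, with a strengthened inductive claim that tracks which of the sets $h(r_t)_t$ or $\e_t(\dots)$ contains $d$ (restricted appropriately). Explicitly, I would show simultaneously that: if $t$ is a $V$-term, then $d \in h(r_t)_t$; if $t$ is an $S$-term with $S \neq \check S$, then $d \in \e_t(h^S(r_t))$; and if $t$ is an $\check S$-term, then $d \in \e_t(h^{\check S}(r_t); d(r_t))$, so that $d(r_t)$ itself plays the role of the witness. This triple claim parallels Lemma~\ref{lem:A-hard-pre}, with the $\check S$-case modified to feed $d(r_t)$ in as the witness argument of $\e_t$. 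Throughout, ``$d$'' is abused to denote also the appropriate restriction of the map to the subtree under consideration.

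The base case $t = \vertex$ is trivial, since $\check S^{\relT \vertex} = \emptyset$ forces $d = \emptyset$, and $h(r_\vertex)_\vertex = \{\emptyset\}$ by Definition~\ref{def:vertices-of-adjoint-edge}. For $t = \edge_S(t_1, \dots, t_k)$, I would restrict $h$ to each subtree $\relT{t_i}$ and $d$ to $\check S^{\relT{t_i}}$; the inductive hypothesis gives that each restriction lies in $h(r_{t_i})_{t_i}$, and then the definition of $\e_t$ yields the desired membership. When $S \neq \check S$ this is immediate from the union description of $\e_t$, while when $S = \check S$ one further uses that $\check S^{\relT t}$ is the disjoint union of the $\check S^{\relT{t_i}}$ with the extra element $r_t$, which $d$ sends to $d(r_t)$, matching the extra slot in $\e_t(\cdot;\, d(r_t))$. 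For $t = \pr_i(t')$, the inductive hypothesis applied to $t'$ places $d \in \e_{t'}(h^S(r_{t'}))$ or $\e_{t'}(h^{\check S}(r_{t'}); d(r_{t'}))$, and then applying condition (A1) or (B1) (respectively) to the edge $h^S(r_{t'})$ of $\Omega(\rel B)$ delivers $d \in h(r_t)_t$ as required.

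Once the inductive claim is established, the conclusion that $d\colon \Gamma(\relT t) \to \rel B$ is a homomorphism follows directly from the definition of $\Omega(\rel B)$ together with the relevant closure clause: the $V$-term case is built into Definition~\ref{def:vertices-of-adjoint-edge}; the $S$-term case with $S\neq \check S$ uses (A2); and the $\check S$-term case uses (B2), where by hypothesis $d(r_t)$ is a witness for the edge $h^{\check S}(r_t)$ so (B2) is applicable. The only delicate point will be to cleanly split the edge-term inductive step into the two subcases $S = \check S$ and $S \neq \check S$, in order to correctly account for the root contribution $r_t \mapsto d(r_t)$, which is present in the domain of $d$ precisely when $S = \check S$; the rest of the argument is a routine adaptation of the proof of Lemma~\ref{lem:A-hard-pre}.
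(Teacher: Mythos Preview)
Your proposal is correct and follows essentially the same approach as the paper: the same strengthened three-case inductive claim, the same case analysis (including the split of the edge case into $S=\check S$ and $S\neq\check S$), and the same concluding appeal to (A2)/(B2) or the definition. Your explicit remark that the hypothesis ``$d(r_t)$ is a witness for $h^{\check S}(r_t)$'' is what makes (B1) and (B2) applicable with the particular value $d(r_t)$ is a detail the paper leaves implicit, but otherwise the arguments coincide.
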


\begin{proof}
  The proof is similar to the proof of Lemma~\ref{lem:A-hard-pre}. We first show by induction on the term $t$ that either $d \in h(r_t)_t$ if $t$ is a $V$-term, or $d \in \e_t(h^{\check S}(r_t); d(r_t))$ if $t$ is an $\check S$-term, or $d \in \e_t(h^S(r_t))$ if $t$ is an $S$-term for a symbol $S\ne {\check S}$.
  \begin{description}
    \item [Case $t = \vertex$]
      Since $\check S^{\relT\vertex} = \emptyset$, the only map $d \colon \check S^{\relT\vertex} \to B$ is the empty map, which is in $h(r_\vertex)_\vertex$ by definition.
    \item [Case $t = \edge_{\check S} (t_1, \dots, t_k)$]
      Note that restrictions of $h$ to subtrees $\relT{t_i}$'s are homomorphisms, hence, for all $i \in \{1, \dots, k\}$, $h(r_{t_i})_{t_i}$ contains the corresponding restrictions of $d$ by the inductive assumption. The claim then follows from the definition of $\e_t(h^{\check S}(r_t); d(r_t))$.
    \item [Case $t = \edge_S (t_1, \dots, t_k)$ where $S\neq \check S$]
      This is proved in the same way as the above case, with the exception that we use the definition of $\e_t(h^S(r_t))$ instead of $\e_t(h^{\check S}(r_t); d(r_t))$.
    \item [Case $t = \pr_i (s)$]
      Since $h$ is a homomorphism from $\relT t = \relT s$ to $\rel B$, we know that either $\e_s(h^S(r_s))$, if $s$ is an $S$-term for $S\neq \check S$, or $\e_s(h^{\check S}(r_s); d(r_s))$, if $s$ is an $\check S$-term, contains $d$ by the inductive assumption. The claim then follows from either (A1), or (B1).
  \end{description}
  The lemma then immediately follows by the definition if $t$ is a $V$-term, by (B2) if $t$ is an $\check S$-term, or by (A2) otherwise.
\end{proof}

Finally, in the proof of Lemma~\ref{lem:A-hard}, we define $f$ to map $e \in {\check S}^{\rel A}$ to a witness of the edge $g(e) \in {\check S}^{\Omega(\rel B)}$ instead of letting $f(u)$ be the unique value attained by the map in $g(u)_\vertex$. The rest of the proof is then a straightforward application of the above, i.e., completely analogous to the proof of Lemma~\ref{lem:A-hard} using Lemma~\ref{lem:A-hard-pre}.

\begin{lemma} \label{lem:general-hard}
  If there is a homomorphism $g\colon \rel A \to \Omega(\rel B)$, then there is a homomorphism $f\colon \Gamma(\rel A) \to \rel B$.
\end{lemma}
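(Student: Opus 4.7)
The plan is to mirror the structure of Lemma~\ref{lem:A-hard} with the modifications indicated in the paragraph preceding the statement. First I would define $f\colon \Gamma(\rel A) \to \rel B$ on the domain $\check S^{\rel A}$ by setting $f(e)$ to be a chosen witness of the edge $g^{\check S}(e) \in \check S^{\Omega(\rel B)}$; such a witness exists by the definition of $\check S^{\Omega(\rel B)}$, and the (arbitrary) choice is harmless because verifying that $f$ preserves relations will only use that $f(e)$ \emph{is} a witness, not which one.

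Next I would check that $f$ preserves the $\tau$-relations. Fix a $\tau$-symbol $R$ of arity $k$ and a tuple $(e_1, \dots, e_k) \in R^{\Gamma(\rel A)}$. By the description of $R^{\Gamma(\rel A)}$ recalled at the start of Section~\ref{sec:adjoint-edge}, there exists a homomorphism $h\colon \rel Q_R \to \rel A$ with $h^{\check S}(x_i) = e_i$ for all $i$. Assuming (without loss of generality) that $\rel Q_R = \relT{t_R}$, the composition $g\circ h\colon \relT{t_R} \to \Omega(\rel B)$ is a homomorphism, so I can try to apply the preceding lemma to it.

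The key step is to supply the right auxiliary map to feed into that lemma. I would take the map $d\colon \check S^{\relT{t_R}} \to B$ defined by $d(s) = f(h^{\check S}(s))$; since $g^{\check S}(h^{\check S}(s)) = (g\circ h)^{\check S}(s)$ and $f$ was defined so that $f(h^{\check S}(s))$ is a witness of $g^{\check S}(h^{\check S}(s))$, this $d$ satisfies the witness-compatibility hypothesis of the previous lemma with $g\circ h$ in place of $h$. The lemma then yields that $d = f\circ h^{\check S}$ is a homomorphism $\Gamma(\relT{t_R}) \to \rel B$.

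Finally I would conclude: taking $r$ to be the identity on $\relT{t_R}$ in the formula $R^{\Gamma(\relT{t_R})} = \{(r^{\check S}(x_1),\dots,r^{\check S}(x_k)) \mid r\colon \relT{t_R}\to \relT{t_R}\}$ shows that $(x_1,\dots,x_k) \in R^{\Gamma(\relT{t_R})}$, so applying the homomorphism $f\circ h^{\check S}$ gives
\[
  (f(e_1),\dots,f(e_k)) = \bigl(f(h^{\check S}(x_1)),\dots,f(h^{\check S}(x_k))\bigr) \in R^{\rel B},
\]
as required. The only real obstacle is bookkeeping: making sure that the chosen witnesses are threaded consistently through the composition so that the hypothesis of the preceding lemma is satisfied; the actual inductive work has already been done there.
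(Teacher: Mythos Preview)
Your proposal is correct and follows essentially the same approach as the paper's proof: define $f$ via witnesses of $g^{\check S}(e)$, then for each $R$-edge of $\Gamma(\rel A)$ factor through $h\colon \rel Q_R \to \rel A$, apply the preceding lemma to $g\circ h$ with $d = f\circ h^{\check S}$, and conclude using $(x_1,\dots,x_k)\in R^{\Gamma(\relT{t_R})}$. Your explicit verification that $d(s)=f(h^{\check S}(s))$ is a witness of $(g\circ h)^{\check S}(s)$ is exactly the bookkeeping the paper alludes to when it says ``$f\circ h^{\check S}$ in place of $d$''.
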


\begin{proof}
  As before, we assume that, for each $\tau$-symbol $R$, $\rel Q_R$ and $\relT{t_R}$ are equal and not just isomorphic.
  We define a mapping $f\colon \Gamma(A) \to B$ by setting, for all $s\in \check S^{\rel A}$, $f(s) = e_\bullet$ for some witness $e_\bullet$ of the edge $g^{\check S}(s) \in \check S^{\Omega(\rel B)}$, and claim that this $f$ is a homomorphism from $\Gamma(\rel A)$ to $\rel B$.

  We need to show that $f$ preserves each relation $R$. Assume that $(u_1,\dots, u_k) \in R^{\Gamma(\rel A)}$, i.e., there is a homomorphism $h\colon \rel Q_R \to \rel A$ such that
  \[
    (h^{\check S}(x_1), \dots, h^{\check S}(x_k)) = (u_1, \dots, u_k).
  \]
  The previous lemma applied to the homomorphism $g\circ h\colon \relT{t_R} \to \Omega(\rel B)$ in place of $h$ and the map $f\circ h^{\check S}$ in place of $d$ implies that $f\circ h^{\check S} \colon \Gamma(\rel Q_R) \to \rel B$ is a homomorphism, which in turn implies
  \[
    (f(u_1), \dots, f(u_k)) = (fh^{\check S}(x_1), \dots, fh^{\check S}(x_k)) \in R^{\rel B}
  \]
  since $(x_1, \dots, x_k) \in R^{\Gamma(\rel Q_R)}$, as we wanted to show.
\end{proof}

This concludes the proof of Theorem~\ref{thm:adjoint-edge}.
 
\section{Composition of adjoints}
  \label{sec:composition}

In this section, we give an example of what can be achieved by composing functors defined in Sections 5 and 6. The power of composing two adjoints to obtain more complicated constructions was observed in \cite[Section 5]{FT15}, where the authors considered composition of digraph functors with adjoints. This section gives several examples that show that we can obtain adjoints to more digraph functors by composing functors that go outside of the scope of digraphs into general relational structures. Naturally, our constructions also give more adjoints between general relational structures.

We start with a few general observations. The key fact that makes composition of adjoints useful is the following well-known category-theoretical observation.

\begin{lemma} \label{lem:composition}
  Assume that $\Lambda_1, \Gamma_1$ and $\Lambda_2, \Gamma_2$ are two pairs of (thin) adjoint functors, then $\Lambda_1\circ \Lambda_2$ is a left adjoint to $\Gamma_2\circ \Gamma_1$.
\end{lemma}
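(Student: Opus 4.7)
The plan is to prove this by a direct two-step chain of equivalences, unpacking the definition of adjunction twice. Concretely, I want to show that for every structure $\rel A$ of the signature on which $\Lambda_2$ acts and every structure $\rel B$ of the signature on which $\Gamma_1$ acts, one has $(\Lambda_1\circ\Lambda_2)(\rel A)\to \rel B$ if and only if $\rel A\to (\Gamma_2\circ\Gamma_1)(\rel B)$.

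First, I would note that the signatures match up so that the compositions are well-defined: $\Lambda_2$ and $\Gamma_2$ relate, say, signatures $\sigma_1$ and $\sigma_2$, and $\Lambda_1$ and $\Gamma_1$ relate signatures $\sigma_2$ and $\sigma_3$, so $\Lambda_1\circ\Lambda_2$ goes from $\sigma_1$-structures to $\sigma_3$-structures and $\Gamma_2\circ\Gamma_1$ goes from $\sigma_3$-structures to $\sigma_1$-structures. Then I apply the adjunction of $\Lambda_1$ and $\Gamma_1$ to the structures $\Lambda_2(\rel A)$ and $\rel B$ to obtain
\[
  \Lambda_1(\Lambda_2(\rel A))\to \rel B \iff \Lambda_2(\rel A)\to \Gamma_1(\rel B).
\]
Next, I apply the adjunction of $\Lambda_2$ and $\Gamma_2$ to the structures $\rel A$ and $\Gamma_1(\rel B)$ to get
\[
  \Lambda_2(\rel A)\to \Gamma_1(\rel B) \iff \rel A\to \Gamma_2(\Gamma_1(\rel B)).
\]
Chaining these two equivalences and rewriting $\Lambda_1(\Lambda_2(\rel A))$ as $(\Lambda_1\circ\Lambda_2)(\rel A)$ and $\Gamma_2(\Gamma_1(\rel B))$ as $(\Gamma_2\circ\Gamma_1)(\rel B)$ yields the desired biconditional.

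There is no real obstacle here; the proof is a standard category-theoretic bookkeeping argument and fits naturally in the posetal (thin) setting used throughout this paper, where one only needs to track the existence of homomorphisms rather than natural isomorphisms of hom-sets. If a reader prefers, the same argument can be phrased as saying that the composition of right adjoints is right adjoint to the composition of the corresponding left adjoints, which is precisely the statement above.
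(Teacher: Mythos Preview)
Your proof is correct and essentially identical to the paper's: both give the two-step chain of equivalences $\Lambda_1\Lambda_2(\rel A)\to\rel B \iff \Lambda_2(\rel A)\to\Gamma_1(\rel B) \iff \rel A\to\Gamma_2\Gamma_1(\rel B)$, with your version merely spelling out the signature bookkeeping more explicitly.
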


\begin{proof}
  The proof is straightforward. We get the following string of equivalences from the two adjoints: $\Lambda_1\Lambda_2(\rel A) \to \rel B$ if and only if $\Lambda_2(\rel A) \to \Gamma_1(\rel B)$ if and only if $\rel A \to \Gamma_2\Gamma_1(\rel B)$ for any two structures $\rel A$ and $\rel B$ of the right signatures.
\end{proof}

We note that a composition of Pultr functors gives a Pultr functor \cite{Pul70}. We include a sketch of a proof of a slightly weaker statement.

\begin{lemma}
  Let $\Lambda_1, \Gamma_1$ and $\Lambda_2, \Gamma_2$ be two pairs of left and central Pultr functors such that $\Lambda_1 \circ \Lambda_2$ and $\Gamma_2 \circ \Gamma_1$ are well-defined. Then there is a pair of Pultr functors $\Lambda$ and $\Gamma$ such that $\Lambda(\rel A)$ and $\Lambda_1 \circ \Lambda_2(\rel A)$ are isomorphic for all $\rel A$, and $\Gamma(\rel B)$ and $\Gamma_2 \circ \Gamma_1(\rel B)$ are homomorphically equivalent for all $\rel B$.
\end{lemma}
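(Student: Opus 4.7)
The strategy is to construct an explicit $(\sigma_1, \tau_2)$-Pultr template whose associated left Pultr functor $\Lambda$ is isomorphic to $\Lambda_1 \circ \Lambda_2$, and then deduce the statement about $\Gamma$ from uniqueness of right adjoints. Here $\sigma_1$ is the target signature of $\Lambda_1$ and $\tau_2$ is the source signature of $\Lambda_2$; the assumption that the compositions are well-defined forces the source signature of $\Lambda_1$ to coincide with the target signature of $\Lambda_2$.

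For $i \in \{1, 2\}$, let $(\rel P_i, \{\rel Q^i_R\}_R, \{\epsilon^i_{j,R}\}_{j,R})$ be the Pultr template defining the pair $(\Lambda_i, \Gamma_i)$. Although the paper treats $\Lambda_i$ as a thin functor, the gluing definition of $\Lambda_1$ extends naturally to homomorphisms: given $f\colon \rel A \to \rel B$, one obtains $\Lambda_1(f)\colon \Lambda_1(\rel A) \to \Lambda_1(\rel B)$ by sending each copy of $\rel P_1$ indexed by $a$ onto the copy indexed by $f(a)$ via the identity, and likewise mapping each copy of $\rel Q^1_S$ indexed by an edge to the copy indexed by the image edge. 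In particular, $\Lambda_1$ may be applied to every $\epsilon^2_{j,R}$. With this, define the composed template by
\[
  \rel P := \Lambda_1(\rel P_2),
  \quad
  \rel Q_R := \Lambda_1(\rel Q^2_R),
  \quad
  \epsilon_{j,R} := \Lambda_1(\epsilon^2_{j,R}),
\]
and let $\Lambda, \Gamma$ be the pair of Pultr functors this template determines.

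The heart of the argument is to verify $\Lambda(\rel A) \iso \Lambda_1(\Lambda_2(\rel A))$ for every $\tau_2$-structure $\rel A$. Both structures are gluings of the same collection of ``atoms'': one copy of $\rel P_1$ for each vertex of any $\rel P_2$- or $\rel Q^2_R$-piece appearing in $\Lambda_2(\rel A)$, together with one copy of $\rel Q^1_S$ for each $S$-edge of such a piece. The two constructions simply perform two layers of gluing in opposite orders. In $\Lambda_1(\Lambda_2(\rel A))$ one first glues the $\rel P_2$- and $\rel Q^2_R$-copies along the $\epsilon^2_{i,R}$ to obtain $\Lambda_2(\rel A)$ and only then replaces vertices and edges of $\Lambda_2(\rel A)$ by $\rel P_1$'s and $\rel Q^1_S$'s glued along the $\epsilon^1_{j,S}$. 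In $\Lambda(\rel A)$ one first forms the $\Lambda_1$-image of each atom of the second template and only then glues these images along the $\Lambda_1(\epsilon^2_{i,R})$. I would define the isomorphism directly on the common pool of atoms and verify that the identifications imposed by the two regimes coincide; the key point is that the image of $\Lambda_1(\epsilon^2_{j,R})$ inside $\Lambda_1(\rel Q^2_R)$ is exactly the $\Lambda_1$-image of the subpart of $\rel Q^2_R$ hit by $\epsilon^2_{j,R}$, so identifying this subpart before applying $\Lambda_1$ produces the same structure as identifying its image after applying $\Lambda_1$.

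Once $\Lambda \iso \Lambda_1 \circ \Lambda_2$ is established, Pultr's theorem says $\Gamma$ is a right adjoint to $\Lambda$, while Lemma~\ref{lem:composition} says $\Gamma_2 \circ \Gamma_1$ is a right adjoint to $\Lambda_1 \circ \Lambda_2$. Thus $\Gamma$ and $\Gamma_2 \circ \Gamma_1$ are both right adjoints to the same functor in the thin category of relational structures, and the uniqueness of right adjoints noted at the end of Section~2 yields that $\Gamma(\rel B)$ and $\Gamma_2(\Gamma_1(\rel B))$ are homomorphically equivalent for every $\rel B$. The main obstacle is the combinatorial bookkeeping in the middle step: conceptually this is the familiar fact that a left adjoint preserves the colimit presenting $\Lambda_2(\rel A)$, but because the paper develops Pultr functors by an explicit gluing rather than via colimits, the identification has to be carried out by hand at the level of the atoms and their pairwise identifications.
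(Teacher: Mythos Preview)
Your proposal is correct and follows essentially the same route as the paper: define the composed template by applying $\Lambda_1$ to the template of $(\Lambda_2,\Gamma_2)$, argue that the two-stage gluing in $\Lambda_1\circ\Lambda_2$ coincides with the single-stage gluing in $\Lambda$, and then invoke uniqueness of right adjoints (via Lemma~\ref{lem:composition}) for the statement about $\Gamma$. Your remark that the isomorphism step is really ``left adjoints preserve colimits'' is a nice conceptual gloss that the paper leaves implicit.
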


\begin{proof}[Proof sketch]
  The template of the composition can be obtained as the $\Lambda_1$-image of the Pultr template defining $\Lambda_2$ and $\Gamma_2$, i.e., if the template of $\Lambda_2$ and $\Gamma_2$ is composed of structures $\rel P$ and $\rel Q_R$, and homomorphisms $\epsilon_{i, R}$, then the template of the composition consists of structures $\Lambda_1(\rel P)$ and $\Lambda_1(\rel Q_R)$, and homomorphisms $\epsilon_{i, R}^{\Lambda_1}\colon \Lambda_1(\rel P) \to \Lambda_1(\rel Q_R)$ that are induced by $\Lambda_1$ from $\epsilon_{i, R}\colon \rel P \to \rel Q_R$.

  The composition $\Lambda_1 \circ \Lambda_2$ is, by defininion, a two-step process: In the first step (applying $\Lambda_2$), we replace each vertex with a copy of $\rel P$, and in the second step (applying $\Lambda_1$), each of these copies is replaced with a copy of $\Lambda_1(\rel P)$. Analogously, an $R$-edge is replaced with a copy of $\Lambda_1(\rel Q_R)$. It is also not hard to observe that the identification corresponds to the maps $\epsilon_{i, R}^{\Lambda_1}$, which concludes that, for all $\rel A$, $\Lambda_1 \circ \Lambda_2(\rel A)$ is isomorphic to $\Lambda(\rel A)$. We get that $\Gamma_2 \circ \Gamma_1(\rel B)$ and $\Gamma(\rel B)$ are homomorphically equivalent for all $\rel B$, since both functors are right adjoints to $\Lambda$.
\end{proof}

The following theorem is obtained by composing adjoints constructed in Sections~\ref{sec:adjoint-1} and \ref{sec:adjoint-edge}. Although it is not an exhaustive list of adjunctions that can be constructed by such compositions, it provides more adjoints to digraph functors on top of those provided in \cite{FT15}. The theorem concerns a relatively general case of Pultr templates where $\rel P$ is an arbitrary tree. We only require that copies of $\rel P$ in the respective $\rel Q_R$'s intersect in at most one vertex. We also note that this theorem covers the cases of central Pultr functors whose adjoints are provided by Theorems~\ref{thm:adjoint-1} and \ref{thm:adjoint-edge}.

\begin{theorem} \label{thm:7.1}
  Assume a~$(\sigma, \tau)$-Pultr template with $\rel P$ and all $\rel Q_R$'s being $\sigma$-trees such that, for each $\tau$-symbol $R$,
  \begin{enumerate}
    \item  $\epsilon_{i,R}$ is injective for all $i\in \{1, \dots, \ar R\}$, and
    \item for each $i\neq j$, $i,j\in \{1, \dots, \ar R\}$, the images of $\rel P$ under $\epsilon_{i,R}$ and $\epsilon_{j,R}$ intersect in at most one vertex.
  \end{enumerate}
  Then the corresponding central Pultr functor $\Gamma$ has a right adjoint.
\end{theorem}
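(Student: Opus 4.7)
The plan is to prove the theorem by induction on the number of edges of $\rel P$, combining the adjoints constructed in the previous two sections via Lemma~\ref{lem:composition}. The base cases are immediate: if $\rel P = \rel V_1$, Theorem~\ref{thm:adjoint-1} provides the right adjoint $\Omega$, and if $\rel P = \rel S_1$ for some $\sigma$-symbol $S$, Theorem~\ref{thm:adjoint-edge} does the same.

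For the inductive step, assume $\rel P$ has at least two edges. Since $\rel P$ is a finite $\sigma$-tree, it admits a \emph{leaf edge} $e^*$ (say of relational symbol $S$): an edge such that at most one of its vertices --- call it $p^*$, the attachment vertex --- is shared with other edges of $\rel P$, while the remaining vertices are leaves. Deleting $e^*$ together with its leaves produces a strictly smaller $\sigma$-tree $\rel P'$ still containing $p^*$. I would decompose $\Lambda = \Lambda_1 \circ \Lambda_2$ through an intermediate signature $\tau^* = \sigma \cup \{M\}$, where $M$ is a new unary ``marker'' symbol. The first functor $\Lambda_2 \colon \tau\text{-struct} \to \tau^*\text{-struct}$ would correspond to the Pultr template with $\rel P^{(2)} = \rel P'$ (with $M$-marker at $p^*$) and with $\rel Q^{(2)}_R$ obtained from $\rel Q_R$ by excising each image $\epsilon_{i,R}(e^*)$ together with its leaves and marking each $\epsilon_{i,R}(p^*)$ by $M$. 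The second functor $\Lambda_1 \colon \tau^*\text{-struct} \to \sigma\text{-struct}$ would use the Section~\ref{sec:adjoint-1} template $\rel P^{(1)} = \rel V_1$, $\rel Q^{(1)}_{S'} = \rel{S'}_1$ for each $\sigma$-symbol $S'$ with the canonical identification maps, and $\rel Q^{(1)}_M = \rel S_1$ with the vertex of $\rel V_1$ mapped to $p^*$. By Theorem~\ref{thm:adjoint-1} (applied to $\Lambda_1$) and by the inductive hypothesis (applied to $\Lambda_2$, whose $\rel P^{(2)}$ has strictly fewer edges than $\rel P$), both $\Gamma_1$ and $\Gamma_2$ would have right adjoints $\Omega_1$ and $\Omega_2$, and Lemma~\ref{lem:composition} would then yield $\Omega_2 \circ \Omega_1$ as a right adjoint of $\Gamma = \Gamma_2 \circ \Gamma_1$.

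The main technical obstacle is to verify that $\rel Q^{(2)}_R$ remains a $\tau^*$-tree and continues to satisfy the injectivity and intersection-in-at-most-one-vertex conditions, so that the inductive hypothesis applies. The delicate point is that a leaf vertex $\epsilon_{i,R}(p_j)$ of one embedded copy of $\rel P$ inside $\rel Q_R$ might coincide with an interior vertex of another copy $\epsilon_{j',R}(\rel P)$ (this is permitted under the one-vertex intersection condition), in which case naive removal of that vertex would disconnect $\rel Q_R$. Addressing this requires a careful choice of $e^*$ in $\rel P$ so that its images inside every $\rel Q_R$ avoid such intersections --- exploiting the fact that any two copies of $\rel P$ share only a single vertex, which leaves enough ``free'' leaf edges --- or, failing a single uniform choice, a longer composition using several marker symbols so that each excision step only removes genuine leaves of $\rel Q_R$. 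Once this structural issue is resolved, checking that $\Lambda = \Lambda_1 \circ \Lambda_2$ up to isomorphism is a routine matter of tracking vertices through the two-stage gadget replacement, and the induction closes.
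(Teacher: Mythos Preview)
Your high-level plan --- decompose $\Gamma$ and invoke Lemma~\ref{lem:composition} --- matches the paper, but your specific decomposition is more complicated than necessary and the obstacle you flag is real and in fact broader than you state.

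The paper uses no induction. It performs a single two-step decomposition through the signature $\upsilon = \sigma \cup \{S\}$, where $S$ is a fresh symbol of arity $p = \lvert P\rvert$. The first functor $\Gamma_1$ (a Section~\ref{sec:adjoint-1} functor, so with one-vertex $\rel P$) keeps all $\sigma$-relations and adds $S^{\Gamma_1(\rel B)} = \{(h(1), \ldots, h(p)) \mid h \colon \rel P \to \rel B\}$. The second functor $\Gamma_2$ (a Section~\ref{sec:adjoint-edge} functor, with $\rel P' = \rel S_1$) uses $\rel Q'_R$ obtained from $\rel Q_R$ by deleting, for each $i$, all \emph{edges} in the image $\epsilon_{i,R}(\rel P)$ and inserting the single $S$-edge $(\epsilon_{i,R}(1), \ldots, \epsilon_{i,R}(p))$ on exactly the same vertex set. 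Injectivity of $\epsilon_{i,R}$ ensures this $S$-edge has no repeated entries; the one-vertex-intersection hypothesis ensures two such $S$-edges cannot close a cycle; and since no vertices are removed, the rest of $\rel Q_R$ stays attached exactly as before. Hence $\rel Q'_R$ is a $\upsilon$-tree, Theorems~\ref{thm:adjoint-1} and~\ref{thm:adjoint-edge} apply directly, and $\Omega_1 \circ \Omega_2$ is the right adjoint.

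Your inductive peel-off-a-leaf approach runs into trouble precisely because you try to remove vertices. The difficulty is not only that a leaf $\epsilon_{i,R}(p_j)$ of one copy of $\rel P$ might lie inside another copy; it may simply be an interior vertex of $\rel Q_R$ for reasons unrelated to other copies --- nothing in the hypotheses prevents $\rel Q_R$ from attaching further edges at $\epsilon_{i,R}(p_j)$. Concretely, take $\rel P$ a directed path of length~$2$ and $\rel Q_R$ a directed path of length~$10$ with a single $\epsilon_{1,R}$ embedding $\rel P$ as a middle segment; then no leaf edge of $\rel P$ has image a leaf edge of $\rel Q_R$, so your ``careful choice of $e^*$'' does not exist. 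In such cases $\Lambda_1 \circ \Lambda_2$ fails to reproduce $\Lambda$: applying $\Lambda_1$ glues a \emph{fresh} $S$-edge with brand-new leaf vertices at each $M$-marker, whereas the original $\epsilon_{i,R}(e^*)$ shared its non-$p^*$ vertices with the rest of $\rel Q_R$. Your fallback (``a longer composition using several marker symbols'') is left unspecified, and it is not clear it can be made to work without essentially reinventing the paper's idea. That idea --- encode all of $\rel P$ at once by a single high-arity edge on the \emph{same} vertex set, rather than carving it away edge by edge --- is exactly what makes the obstacle vanish.
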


\begin{proof}
  Assume that $P = \{1, \dots, p\}$. The goal is to decompose the functor $\Gamma$ into two Pultr functors $\Gamma_1$ and $\Gamma_2$. The intermediate step is to construct a structure of a new signature. This new signature $\upsilon$ is obtained from $\sigma$ by adding a new relational symbol $S$ of arity $p$ (the size of the domain of $\rel P$) while retaining all symbols in $\sigma$.

  We define the first functor, $\Gamma_1$ that maps $\sigma$-structures to $\upsilon$-structures. Essentially, this functor simply adds a new relation $S$ that is defined by $\rel P$, i.e., $\Gamma_1(\rel B)$ is the $\upsilon$-structure with domain $B$, where the relations are defined as
  \begin{align*}
      R^{\Gamma_1(\rel B)} &= R^{\rel B} \text{ for each $\sigma$-symbol $R$, and}\\
      S^{\Gamma_1(\rel B)} &= \{ (h(1),\dots,h(p)) \mid h \colon \rel P \to \rel B \}.
  \end{align*}
  It is clear that this functor is defined by a $(\sigma, \upsilon)$-Pultr template that satisfies the assumptions of Theorem~\ref{thm:adjoint-1}.

  The second functor, $\Gamma_2$ is defined by altering the original Pultr template for $\Gamma$. The new template consists of $\rel P'$ and $\rel Q'_R$'s defined as follows. First, $\rel P' = \rel S_1$. For each $\tau$-symbol $R$, we obtain $\rel Q'_R$ from $\rel Q_R$ by the following procedure: For each $i \in \{1, \dots, \ar R\}$, remove from $\rel Q_R$ all edges in the image of $\epsilon_{i, R}$, and add the edge $(\epsilon_{i,R}(1), \dots, \epsilon_{i,R}(p)) \in S^{\rel Q'_R}$.
  Finally, we let $\epsilon_{i,R}' = \epsilon_{i,R}$, for all $i$ and $R$, which is a homomorphism $\rel P' \to \rel Q'_R$ since we added the corresponding $S$-edge into $\rel Q'_R$.
  Observe that, since $\epsilon_{i,R}$'s are injective, the above procedure does not introduce reflexive tuples (i.e., tuples with repeated entries), and, since the images of $\epsilon_{i,R}$ and $\epsilon_{j,R}$ intersect in at most one vertex for $i\neq j$, this does not introduce cycles into $\rel Q'_R$ using $S$-edges, and, since all other edges have been removed in the image, it does not introduce any other cycles. Hence, $\rel Q'_R$ is still a tree for each $\tau$-symbol $R$, and therefore Theorem~\ref{thm:adjoint-edge} applies.

  The above two paragraphs show that $\Gamma_1$ and $\Gamma_2$ have adjoints $\Omega_1$ and $\Omega_2$. And it is straightforward to check that $\Gamma = \Gamma_2 \circ \Gamma_1$. This concludes that $\Omega_1 \circ \Omega_2$ is a right adjoint to $\Gamma$ by Lemma~\ref{lem:composition}.
\end{proof}

\section{Conclusion}
  \label{sec:conclusion}

We have studied the problem of characterising central Pultr functors for arbitrary relational structures that admit a right adjoint, and, for those that do, giving an explicit construction for such an adjoint.
There is a necessary condition for the existence of such an adjoint (cf.\ Theorem~\ref{thm:adjoints-and-duals} and comments after it). We gave a sufficient condition in Theorem~\ref{thm:7.1}. These two conditions do not match, there is a gap between them, and it is not quite clear what the necessary and sufficient condition should be (even in the case of digraphs). Apart from the requirement that $\rel P$ and all $\rel Q_R$'s are trees, Theorem \ref{thm:7.1} has two additional assumptions. We believe that the second assumption (about intersection of images of $\rel P$ in $\rel Q_R$) is a technicality that can be removed with some extra work. How essential is the first assumption (about injectivity of homomorphisms $\epsilon_{i,R}$)? For example, is it true that, for every central Pultr functor $\Gamma$ that has a right adjoint, there is another central Pultr functor $\Gamma'$ such that (a) for every structure $\rel A$ of appropriate signature, $\Gamma(\rel A)$ and $\Gamma'(\rel A)$ are homomorphically equivalent, and (b) the Pultr template corresponding to $\Gamma'$ has all homomorphisms $\epsilon_{i,R}$ injective?

Finally, let us discuss possible applications of our results in the complexity of (promise) CSPs.
Left and central Pultr functors can alternatively be described as ``gadget replacements'' and ``pp-constructions'', respectively. These two constructions are central to the algebraic theory of complexity of (promise) CSPs where gadget replacements are used as log-space reductions between (promise) CSPs whose templates are related via pp-constructions; see, e.g., \cite{BBKO21} or \cite[Section 4.1]{KOWZ20}.
The opposite type of reductions, where gadgets are replaced by individual constraints, is less common, but has already been shown to be important; see, e.g., \cite{KOWZ20,BK22}.
It was discovered recently, that the right adjoints to some functor can be used to characterise when the functor is a valid reduction between two (promise) CSPs \cite[Theorem 4.6]{KOWZ20}.
Moreover, recent developments in the theory of promise CSPs have called for characterisation of reductions which include central Pultr functors \cite{BK22,KO22,DO23}. Our results provide a characterisation of when a reduction from a general class is a valid reduction between two promise CSPs. Although we are not aware of a specific interesting case of a promise CSP whose hardness is proven by providing a right adjoint to a central Pultr functor, apart from the use of the arc graph construction in \cite{WZ20}, our general results can be used as a foundation for further investigation of these reductions.
 
\subsection*{Acknowledgements}

We would like to thank the reviewers for many insightful comments on the manuscript.

\bibliographystyle{alphaurl}

\end{document}